\documentclass[12pt]{amsart}

\usepackage{psfrag}
\usepackage{graphicx}
\usepackage{amsmath}
\usepackage{amssymb, gensymb}
\usepackage{amscd}
\usepackage{picinpar}
\usepackage{times}
\usepackage{pb-diagram}
\usepackage{graphicx}
\usepackage{xspace}
\usepackage{hyperref}
\usepackage{xcolor}
\usepackage[ruled,vlined]{algorithm2e}

\newtheorem{theorem}{Theorem}[section]
\newtheorem{lemma}[theorem]{Lemma}

\newtheorem{corollary}[theorem]{Corollary}

\setlength{\tabcolsep}{1pt}
\textwidth 6.2in \textheight 9.0in \oddsidemargin 0.0in
\evensidemargin 0.0in \topmargin -0.5in
\addtolength{\textwidth}{1mm} \addtolength{\columnsep}{2mm}

\begin{document}
\title{The Deformation Space of Delaunay Triangulations of the Sphere }

\author{Yanwen Luo, Tianqi Wu, Xiaoping Zhu}
\address{Department of Mathematics, Rutgers University, New Brunswick
 NJ, 08817}
 \email{yl1594@rutgers.edu}

\address{Department of Mathematics, Clark University, Worcester, MA, 01610}
 \email{tianwu@clarku.edu}

\address{Department of Mathematics, Rutgers University, New Brunswick
 New Jersey 08817}
 \email{xz349@rutgers.edu}

\thanks{Acknowledgement: The authors were supported in part by NSF 1737876, NSF 1760471, NSF DMS FRG 1760527 and NSF DMS 1811878.}

\keywords{geodesic triangulations, angle structure, Delaunay triangulations}

\begin{abstract}
In this paper, we determine the topology of the spaces of convex polyhedra inscribed in the unit $2$-sphere and the spaces of strictly Delaunay geodesic triangulations of the unit $2$-sphere. These spaces can be regarded as discretized groups of diffeomorphisms of the unit $2$-sphere. Hence, it is natural to conjecture that these spaces have the same homotopy types as those of their smooth counterparts. The main result of this paper confirms this conjecture for the unit $2$-sphere. It follows from an observation on the variational principles on triangulated surfaces developed by I. Rivin. 

On the contrary, the similar conjecture does not hold in the cases of flat tori and convex polygons. We will construct simple examples of flat tori and convex polygons such that the corresponding spaces of Delaunay geodesic triangulations are not connected. 
\end{abstract}

\maketitle

\section{Introduction}

One of the fundamental problems in low dimensional topology is to identify the homotopy types of groups of diffeomorphisms of a smooth manifold. Smale \cite{Sm} proved that the group of orientation preserving diffeomorphisms of the $2$-sphere is homotopy equivalent to $SO(3)$. 

This paper studies two types of finite dimensional spaces which may be considered as discrete analogues of the group of  orientation preserving diffeomorphisms of the $2$-sphere. They are the deformation spaces of Delaunay triangulations of the unit $2$-sphere and the deformation spaces of convex polyhedra inscribed in the unit $2$-sphere. The main results of this paper show that these discrete analogues are homotopy equivalent to $SO(3)$. 

\begin{theorem}
\label{maintheorem2}
The deformation space of Delaunay triangulations of the unit $2$-sphere is homeomorphic to $SO(3)\times \mathbb{R}^k$ for some $k>0$.
\end{theorem}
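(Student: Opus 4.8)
The plan is to fix a combinatorial triangulation $T$ of $S^2$ with $n$ labeled vertices, realized so as to induce a fixed orientation --- so that the deformation space $\mathcal D=\mathcal D_T$ is connected --- and to reduce everything to Rivin's theory of ideal hyperbolic polyhedra. First I would set up the dictionary between geodesic triangulations and inscribed polytopes. If $n$ distinct points of the unit sphere $S^2\subset\mathbb R^3$ have the origin in the interior of their convex hull, then that hull is a convex polytope inscribed in $S^2$, and radially projecting its boundary faces to $S^2$ produces a geodesic triangulation: each edge projects to a great-circle arc, since the chord joining two points of $S^2$ spans a plane through the origin, and the triangulation is \emph{strictly} Delaunay precisely when the polytope is strictly convex with triangular faces (a face plane meets $S^2$ in the circumscribed circle of that face, and strict convexity says that every other vertex lies strictly on the far side, i.e.\ strictly outside that circle). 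Conversely every strictly Delaunay geodesic triangulation of $S^2$ arises this way from its vertex set, and uniquely; so radial projection is a homeomorphism from $\mathcal D$ onto the space $\mathcal P$ of convex polytopes inscribed in $S^2$, of combinatorial type $T$, with orientation-compatible labeled vertices, and with the origin in the interior. Passing to the Klein--Beltrami ball model of $\mathbb H^3$ --- with $S^2$ as the ideal boundary and the origin as the center $o$ --- an element of $\mathcal P$ becomes an ideal polyhedron $Q\subset\mathbb H^3$ of type $T$ with $o\in\operatorname{int}Q$, and the group $G=\operatorname{Isom}^+(\mathbb H^3)\cong PSL(2,\mathbb C)$ of projective transformations preserving the ball acts on the larger space $\mathcal I$ of \emph{all} oriented labeled ideal polyhedra of type $T$, with $SO(3)=\operatorname{Stab}(o)\subset G$ preserving $\mathcal P$.

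Next I would invoke Rivin's theorem. It says that the map sending $Q\in\mathcal I$ to its exterior dihedral angle function $\theta\colon E(T)\to(0,\pi)$ descends to a homeomorphism $\mathcal I/G\xrightarrow{\ \sim\ }\mathcal A_T$, where $\mathcal A_T$ is the relative interior of the convex polytope in $\mathbb R^{E(T)}$ cut out by the linear equalities $\sum_{e\ni v}\theta(e)=2\pi$ (one for each vertex $v$) together with the strict linear inequalities $\sum_{e\in\gamma}\theta(e)>2\pi$ over prismatic circuits $\gamma$. This is precisely the observation behind the theorem: Rivin's variational argument is what forces the image to be the interior of a \emph{convex} polytope. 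The $n$ vertex equations are independent, because the $1$-skeleton of $T$ is nonbipartite (it contains triangles); so $\mathcal A_T$ is an open convex set of dimension $|E(T)|-n=(3n-6)-n=2n-6$, and, being open and convex, $\mathcal A_T\cong\mathbb R^{2n-6}$. Moreover $G$ acts freely on $\mathcal I$ --- an orientation-preserving isometry of $\mathbb H^3$ fixing three distinct ideal points is the identity --- and, the action being free and proper, $\mathcal I\to\mathcal I/G=\mathcal A_T$ is a principal $G$-bundle over a contractible base, hence trivial; in particular it has a continuous section $a\mapsto Q_a$, so that every ideal polyhedron of type $T$ with angle data $a$ equals $g\cdot Q_a$ for a unique $g\in G$.

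The last step is assembly. Given $\tau\in\mathcal D$, let $a=a(\tau)\in\mathcal A_T$ be its angle data and $g\in G$ the unique element with $g\cdot Q_a$ equal to the ideal polyhedron of $\tau$; the marking condition is $o\in\operatorname{int}(g\cdot Q_a)$, i.e.\ $g^{-1}o\in\operatorname{int}Q_a$. By the polar (Cartan) decomposition write $g=r\cdot s$ with $r\in SO(3)=\operatorname{Stab}(o)$ and $s$ a transvection; then $g^{-1}o=s^{-1}o$ depends only on $s$, and $p:=s^{-1}o$ ranges over all of $\mathbb H^3$ as $s$ ranges over the transvections. Hence $\tau\mapsto(a,r,p)$ is a homeomorphism
\[
\mathcal D\ \cong\ SO(3)\times W,\qquad W:=\{(a,p)\in\mathcal A_T\times\mathbb H^3:\ p\in\operatorname{int}Q_a\},
\]
and $W$ is the total space of the bundle of ideal polyhedra over $\mathcal A_T$: its fiber over $a$ is the open geodesically convex set $\operatorname{int}Q_a\subset\mathbb H^3\cong\mathbb R^3$, homeomorphic to $\mathbb R^3$, and these fibers depend continuously on $a$ because the supporting hyperplanes of $Q_a$ are determined by the ideal vertices of $Q_a$, which vary continuously with $a$ by Rivin's theorem. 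A fiberwise geodesic rescaling about a locally constant interior point trivializes $W$ locally, so $W\to\mathcal A_T$ is a fiber bundle with fiber $\mathbb R^3$ over the contractible base $\mathcal A_T\cong\mathbb R^{2n-6}$, hence trivial; therefore $W\cong\mathbb R^{2n-6}\times\mathbb R^3=\mathbb R^{2n-3}$ and $\mathcal D\cong SO(3)\times\mathbb R^{2n-3}$. One may thus take $k=2n-3$, which is at least $5$ since every triangulation of $S^2$ has $n\ge 4$ vertices, and in particular $k>0$.

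I expect the main obstacle to be Rivin's theorem itself: the substantial input is his characterization of ideal hyperbolic polyhedra by their dihedral angles, together with the convexity of the angle polytope, and beyond citing it the argument amounts to the observation that this grants contractibility of every parameter space involved. Two softer points still require care. The first is the dictionary of the first paragraph, and especially the requirement that the vertices surround the origin: it cannot be dropped if the \emph{geodesic} triangulation is to cover all of $S^2$, and it is what makes the extra $\mathbb R^3$-factor --- the interior of the polyhedron --- appear. The second is the upgrade from homotopy equivalences to honest homeomorphisms: one must check that $W\to\mathcal A_T$ is a genuinely locally trivial $\mathbb R^3$-bundle, so that $W\cong\mathbb R^{2n-3}$ rather than merely a contractible open manifold, which is exactly where the continuous dependence of the ideal polyhedron on its angle data is used.
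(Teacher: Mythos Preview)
Your argument is correct and arrives at the same value $k=2|V|-3$ as the paper, but the route is genuinely different. The paper fixes a vertex $v_0$, uses the \emph{stereographic} projection from $\varphi(v_0)$ to send an inscribed convex polyhedron to a strictly Delaunay triangulation of a convex polygon in the plane, and then applies Rivin's variational principle for \emph{Euclidean angle structures} (the Lobachevsky functional on interior angles) to identify the space of such planar triangulations, up to similarity, with a cell $\mathcal A_E(T_0)\cong\mathbb R^{2|V|-6}$. The $SO(3)$ factor comes from rotations of $S^2$, and the remaining $\mathbb R^3$ comes from $\varphi^{-1}(0,0)\in\operatorname{int}|T_0|$ together with an interval of scalings governed by the power of the origin with respect to the circumcircles. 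By contrast, you stay in three dimensions: you pass to the Klein model and invoke Rivin's \emph{other} theorem, the characterization of ideal hyperbolic polyhedra by their exterior dihedral angles, to identify $\mathcal I/G$ with the convex set $\mathcal A_T\cong\mathbb R^{2n-6}$; the $SO(3)$ factor then drops out of the Cartan decomposition of $G\cong PSL(2,\mathbb C)$, and the residual $\mathbb R^3$ is the interior of the ideal polyhedron itself.

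What each approach buys: the paper's route is more elementary in that it reduces everything to planar Euclidean geometry and avoids any appeal to the structure of $\operatorname{Isom}^+(\mathbb H^3)$, at the cost of the stereographic bookkeeping and the somewhat ad hoc splitting into shape, basepoint, and diameter in the proof of Theorem~\ref{main12}. Your route is cleaner conceptually---the three extra parameters are simply the location of the origin inside the ideal polyhedron, and the $SO(3)$ is literally the stabilizer of that point---but it relies on the heavier input of Rivin's dihedral-angle classification and on checking that $\mathcal I\to\mathcal A_T$ is a genuine principal $G$-bundle (properness of the $G$-action deserves a word, though it follows readily by normalizing three ideal vertices). Both arguments ultimately rest on the convexity of a Rivin-type angle polytope; yours packages the remaining parameters more symmetrically.
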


\begin{theorem}
\label{maintheorem}
The deformation space of the convex polyhedra inscribed in the unit $2$-sphere whose faces are all triangles is homeomorphic to $SO(3)\times \mathbb{R}^k$ for some $k>0$.
\end{theorem}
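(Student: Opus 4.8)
The plan is to realize the deformation space $\mathcal{P}$ of inscribed triangular polyhedra of a fixed combinatorial type (a triangulation $\mathcal{T}$ of $S^2$ with labelled, oriented vertex set of size $n$) as the total space of a trivial principal bundle over a cell, with fiber the conformal group of $S^2$. First I would record two elementary facts: (i) a convex polyhedron inscribed in the unit sphere with all faces triangles is exactly an \emph{ideal} polyhedron in hyperbolic space $\mathbb{H}^3$ having that $S^2$ as its sphere at infinity, so Rivin's theory applies; and (ii) after radially identifying the vertices of such a polyhedron with points of $S^2$, membership in $\mathcal{P}$ is an open condition in $(S^2)^n$ (strict convex position and the prescribed simplicial face lattice are both stable under small perturbations), so $\mathcal{P}$ is a smooth manifold of dimension $2n$.

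Next I would let the orientation-preserving conformal group $M \cong \mathrm{PSL}(2,\mathbb{C}) \cong \mathrm{Isom}^+(\mathbb{H}^3)$ act on $\mathcal{P}$ through its action on vertex configurations. This action preserves the combinatorial type, since a Möbius transformation carries circles to circles and hence preserves the incidences ``the circumscribed disk of a triple of vertices contains no other vertex'' that cut out the faces; it is free, since a conformal automorphism of $S^2$ fixing $n \ge 4$ points not lying on a common circle is the identity; and it is proper, by properness of the isometric action of $\mathrm{Isom}^+(\mathbb{H}^3)$ on configurations in $\mathbb{H}^3$. Consequently $\mathcal{P} \to \mathcal{P}/M$ is a principal $M$-bundle and $\mathcal{P}/M$ is a manifold of dimension $2n-6$.

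The core input is Rivin's variational description of ideal polyhedra. It provides a homeomorphism from $\mathcal{P}/M$ onto the space $A$ of exterior-dihedral-angle functions $w\colon \mathcal{T}^{(1)} \to (0,\pi)$ satisfying the vertex relations $\sum_{e \ni v} w(e) = 2\pi$ for every vertex $v$ together with the strict prismatic inequalities $\sum_{e \in c} w(e) > 2\pi$ over all non-facial simple cycles $c$ in the dual graph. The crucial observation is that, apart from the linear equalities, all remaining constraints are strict, so $A$ is the intersection of an affine subspace with an open convex set, i.e.\ a \emph{nonempty open convex subset} of an affine space of dimension $|\mathcal{T}^{(1)}| - n = 2n-6$ (using $|\mathcal{T}^{(1)}| = 3n-6$ from Euler's formula), hence homeomorphic to $\mathbb{R}^{2n-6}$. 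In particular $\mathcal{P}/M$ is contractible, so the principal $M$-bundle above is trivial and $\mathcal{P} \cong M \times \mathbb{R}^{2n-6}$.

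Finally, $M \cong \mathrm{PSL}(2,\mathbb{C})$ is homeomorphic to $SO(3) \times \mathbb{R}^3$: the isometric action on $\mathbb{H}^3$ exhibits $SO(3) \to M \to \mathbb{H}^3$ as a fiber bundle, which is trivial because $\mathbb{H}^3$ is contractible and homeomorphic to $\mathbb{R}^3$. Combining, $\mathcal{P} \cong SO(3) \times \mathbb{R}^{2n-3}$, which proves the theorem with $k = 2n-3 > 0$. I expect the main obstacle to be the precise invocation of Rivin's theorem — in particular extracting from it that $A$ is an \emph{open} convex set (so that it is a Euclidean space and not a polytope with part of its boundary) and that Rivin's angle map is a homeomorphism and not merely a bijection — together with verifying that the $M$-action is proper, which is what makes $\mathcal{P}/M$ a genuine manifold and $\mathcal{P}\to\mathcal{P}/M$ an honest bundle. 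I would also remark that the theorem can alternatively be deduced from Theorem~\ref{maintheorem2}: assigning to an inscribed triangular polyhedron the geodesic triangulation of $S^2$ obtained by radially projecting its boundary gives a homeomorphism onto a space of strictly Delaunay triangulations, the Delaunay condition holding precisely because the supporting plane of each face has all other vertices strictly on one side.
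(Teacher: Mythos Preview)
Your main argument is correct and arrives at the same value $k=2n-3$, but it follows a genuinely different route from the paper's. The paper quotients only by $SO(3)$: it pins one vertex at the north pole and one edge direction, then uses the stereographic projection to identify the resulting $(2n-3)$-dimensional slice with the space $Z(T_0,\tilde\pi(\psi),e_{ij})$ of strictly Delaunay triangulations of a convex \emph{polygon}, whose contractibility is obtained from Rivin's variational principle for Euclidean \emph{angle structures} (Theorem~\ref{angle structure} and Lemma~\ref{ae}). You instead quotient by the full M\"obius group $M=\mathrm{PSL}(2,\mathbb C)$, invoke Rivin's characterization of ideal hyperbolic polyhedra by their exterior dihedral angles to identify $\mathcal P/M$ with an open convex set of dimension $2n-6$, and then absorb the extra $\mathbb R^3$ from $M\cong SO(3)\times\mathbb R^3$. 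Your approach is more conceptually invariant and never mentions stereographic projection; the paper's is more hands-on and uses only the angle-structure variational principle, never the dihedral-angle polytope or hyperbolic geometry explicitly. Note that the Rivin theorem you invoke (the homeomorphism between isometry classes of ideal polyhedra and the dihedral-angle polytope) is not the statement cited in the paper as~\cite{R}; you would need Rivin's 1996 \emph{Annals} paper or Hodgson--Rivin, and you should check that those references give a homeomorphism and not merely a bijection.

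Your closing remark, however, is incorrect. Radial projection from the origin only defines a geodesic triangulation of $\mathbb S^2$ when the origin lies in the interior of the convex hull, and this singles out the proper open subset $Y_0(T,\psi)\subsetneq Y(T,\psi)$; the paper makes exactly this distinction in Section~1.3 and proves Theorem~\ref{main2} and Theorem~\ref{main12} by separate (though parallel) arguments. So Theorem~\ref{maintheorem} cannot be deduced from Theorem~\ref{maintheorem2} via the map you describe.
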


However, we will construct explicit examples of spaces of Delaunay triangulations of convex polygons and flat tori which have different homotopy types from their smooth counterparts. Specifically, we show the spaces of Delaunay triangulations of some flat tori and spaces of Delaunay triangulations of some convex polygons are not connected.

Let $T=(V,E,F)$ denote a $2$-dimensional simplicial complex, where $V$ is the set of vertices, $E$ is the set of edges, and $F$ is the set of triangles. Any edge of in $E$ is identified with the closed interval $[0,1]$, and any triangle in $F$ is identified with a Euclidean equilateral triangle with unit length. Denote $T^{(1)}$ as the 1-skeleton of $T$, and $|T|$ as the underlying space of $T$ homeomorphic to a surface possibly with boundary. 

\subsection{Delaunay triangulations of the unit sphere}
Assume $|T|$ is homeomorphic to the unit $2$-sphere denoted by $\mathbb S^2$. An embedding $\varphi:T^{(1)}\rightarrow\mathbb S^2$ is called a \emph{geodesic triangulation} of $\mathbb S^2$ if
the restriction of $\varphi$ on each edge is a geodesic parametrized with constant speed. 
A geodesic triangulation $\varphi$ naturally divides $\mathbb S^2$ into spherical geodesic triangles. For our convenience, we will only consider the geodesic triangulations where all the spherical triangles are convex. A geodesic triangulation $\varphi$ of $\mathbb S^2$ is called a \textit{convex geodesic triangulation} if any spherical triangle in $\varphi$ is contained in some open hemisphere. Such a convex geodesic triangulation $\varphi$ is uniquely determined by the images of the vertices of $T$. 

A convex geodesic triangulation $\varphi$ is called \textit{Delaunay} if it satisfies the empty circle property, meaning that for any pair of adjacent spherical triangles $\triangle ABC$ and $\triangle ABD$, $D$ is not inside the circumcircle of $\triangle ABC$.
 This condition is equivalent to the following condition on the angles of a convex geodesic triangulation:
\begin{equation}
b+c + b' +c' - a - a'\geq 0,
\end{equation} 
where $a, b, c, a', b', c'$ are the inner angles of two neighbored triangles as in Figure \ref{edge}. 
Similarly, a convex geodesic triangulation is called \textit{strictly Delaunay} if for any pair of adjacent spherical triangles $\triangle ABC$ and $\triangle ABD$, $D$ is strictly outside the circumcircle of $\triangle ABC$.
This condition is equivalent to the following condition on the angles of a convex geodesic triangulation:
\begin{equation}
\label{delaunay}
b+c + b' +c' - a - a'> 0,
\end{equation} 

Delaunay and strictly Delaunay triangulations naturally appear in the study of discrete differential geometry and geometry processing. They are widely investigated and implemented in practice. See \cite{DSO,E} for example.
We will focus on strictly Delaunay triangulations in this paper.

	\begin{figure}[h]
		\includegraphics[width=0.4\textwidth]{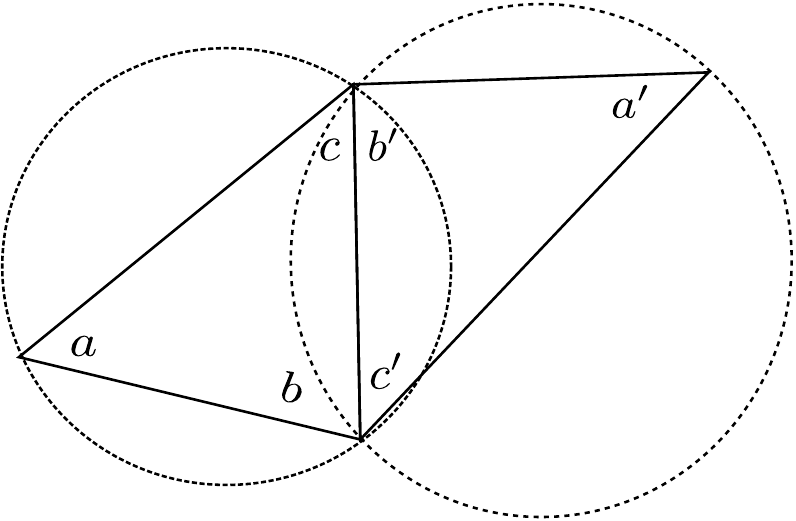}
		\caption{The edge invariant.}
			\label{edge}
	\end{figure}

Given an embedding $\psi:T^{(1)}\rightarrow\mathbb S^2$, we define \emph{the deformation space of Delaunay triangulations of the unit sphere} determined by $\psi$, denoted by $X(T,\psi)$, as the set of all strictly Delaunay convex geodesic triangulations that are isotopic to $\psi$. Then $X(T,\psi)$ is naturally a manifold of dimension $2|V|$ without boundary. Theorem \ref{maintheorem2} can be rephrased as 
\begin{theorem}
\label{main1}
Given a strictly Delaunay convex geodesic triangulation $\psi$, $X(T,\psi)$ is homeomorphic to  $\mathbb R^{2|V|-3}\times SO(3)$.
\end{theorem}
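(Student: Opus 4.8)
The plan is to translate strictly Delaunay triangulations into convex polyhedra inscribed in $\mathbb S^2$ and feed the result into Rivin's variational parametrization of such polyhedra; the only work beyond a verbatim application of Rivin's theorem is to account for the mismatch between the $3$-dimensional rotation group, which acts on $X(T,\psi)$, and the $6$-dimensional Möbius group, which governs Rivin's moduli space. I would first set up the dictionary. Given $\varphi\in X(T,\psi)$, let $P_\varphi:=\mathrm{conv}(\varphi(V))\subset\mathbb R^3$. Since $|T|\cong\mathbb S^2$ and every spherical triangle of $\varphi$ is convex, $\varphi(V)$ lies in no closed hemisphere, so the center $o$ of $\mathbb S^2$ is interior to $P_\varphi$ and radial projection from $o$ recovers $\varphi$ from $\partial P_\varphi$. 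The empty-circle property of $\varphi$ says exactly that the plane of each triangle of $\varphi$ has all remaining vertices on the $o$-side, i.e.\ that $P_\varphi$ is a convex polytope with $1$-skeleton $T^{(1)}$, and its strict form~\eqref{delaunay} says exactly that every face of $P_\varphi$ is a triangle. So $\varphi\mapsto P_\varphi$ is a homeomorphism of $X(T,\psi)$ onto the space $\mathcal Y$ of convex polyhedra inscribed in $\mathbb S^2$ that have only triangular faces, contain $o$ in the interior, and carry the same labeled oriented $1$-skeleton as $P_\psi$; matching the isotopy class of $\psi$ with this combinatorial data is routine, using that the combinatorial type of a geodesic triangulation is an isotopy invariant.

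Next I would apply Rivin's theorem. In the Klein model, $\mathcal Y$ is the open subset $\{o\in\mathrm{int}\,P\}$ of the space $\mathcal Z$ of convex ideal polyhedra in $\mathbb H^3$ with fixed labeled oriented $1$-skeleton $G:=T^{(1)}$; note $G$ is $3$-connected, being a triangulation of $\mathbb S^2$, so Rivin's theorem applies. The group $G_0\cong PSL(2,\mathbb C)$ of orientation-preserving isometries acts on $\mathcal Z$ freely (a Möbius transformation fixing the at least four ideal vertices is the identity) and properly, and Rivin's theorem --- the geometric form of his variational principle for angle structures on the triangulated sphere --- identifies $\mathcal Z/G_0$, via the exterior dihedral angles, with a nonempty open convex polytope $\mathcal P\subset(0,\pi)^E$ cut out by one linear equation per vertex ($\sum_{e\ni v}\theta_e=2\pi$) and one strict linear inequality for each cocycle of $G$ other than a vertex star. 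Since $G$ is connected and non-bipartite (it has triangular faces), the vertex equations are independent, so $\dim\mathcal P=|E|-|V|=2|V|-6$; being a nonempty open convex subset of an affine space, $\mathcal P$ is homeomorphic to $\mathbb R^{2|V|-6}$.

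Finally I would assemble the product. As a principal $PSL(2,\mathbb C)$-bundle over the contractible base $\mathcal P$, the bundle $\mathcal Z\to\mathcal P$ is trivial; I fix a continuous section $q\mapsto P_q$ and combine the resulting identification $\mathcal Z\cong PSL(2,\mathbb C)\times\mathcal P$ with the polar-decomposition homeomorphism $PSL(2,\mathbb C)\cong SO(3)\times\mathbb H^3$, $g\mapsto(u,g^{-1}(o))$, where $u\in SO(3)$ is the rotational part of $g$ and $o\in\mathbb H^3$ is the basepoint fixed by $SO(3)$ (in the Klein model, the center of $\mathbb S^2$). Under these identifications the rotation action on $\mathcal Y$ becomes left translation on the $SO(3)$ factor, and the condition $o\in\mathrm{int}\,P$ becomes $g^{-1}(o)\in\mathrm{int}\,P_q$, independent of $u$; hence $\mathcal Y\cong SO(3)\times W$ with $W=\{(y,q):q\in\mathcal P,\ y\in\mathrm{int}\,P_q\}$. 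Since the polytopes $P_q$ all have combinatorial type $G$ and vary continuously, a fiberwise radial homeomorphism matching corresponding faces trivializes $W\to\mathcal P$, so $W\cong\mathcal P\times\mathbb R^3\cong\mathbb R^{2|V|-3}$, and therefore $X(T,\psi)\cong\mathcal Y\cong SO(3)\times\mathbb R^{2|V|-3}$.

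I expect the last reduction to be the real obstacle: Rivin's theorem controls ideal polyhedra only up to the full $6$-dimensional Möbius group, whereas $X(T,\psi)$ remembers the $3$-dimensional rotation group, so one must isolate the three extra coordinates --- the position of the center $o$ inside the polyhedron --- and show they organize into an honest $\mathbb R^3$ factor, which is what forces the explicit trivialization of the tautological bundle $W$ of polyhedron interiors, as well as the care with labels and orientations needed so that $\mathcal Z/G_0$ is Rivin's polytope itself and not a double cover of it. The remaining ingredients --- the Delaunay $\leftrightarrow$ inscribed-polyhedron dictionary, freeness and properness of the $PSL(2,\mathbb C)$-action, and the standard homeomorphisms $\mathcal P\cong\mathbb R^{2|V|-6}$ and $PSL(2,\mathbb C)\cong SO(3)\times\mathbb R^3$ --- are routine.
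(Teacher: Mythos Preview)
Your argument is correct and reaches the same conclusion, but it is organized differently from the paper's proof. Both begin with the identification of $X(T,\psi)$ with the space $Y_0(T,\psi)$ of inscribed convex polyhedra containing the origin. From there the paper fixes a vertex $v_0$ at the north pole and uses stereographic projection to convert inscribed polyhedra into strictly Delaunay triangulations of convex polygons in the plane; the $SO(3)$ factor is split off by hand (rotate $v_0$ to the pole and a chosen edge to the positive $x$-direction), and the remaining $\mathbb R^{2|V|-3}$ is obtained by parametrizing such planar triangulations by a Euclidean angle structure on the disk $T_0=T\setminus\mathrm{star}(v_0)$ (via the variational principle of Theorem~\ref{angle structure} and Lemma~\ref{ae}) together with the position of the origin's stereographic image and a scale parameter constrained to an interval. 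You instead remain in three dimensions, interpret $Y(T,\psi)$ as the space $\mathcal Z$ of labeled oriented ideal polyhedra in the Klein model, and invoke Rivin's dihedral-angle characterization of ideal polyhedra to identify $\mathcal Z/PSL(2,\mathbb C)$ with a convex polytope $\mathcal P$; the $SO(3)$ then emerges from the Cartan decomposition $PSL(2,\mathbb C)\cong SO(3)\times\mathbb H^3$, and the origin-inside condition becomes the tautological bundle $W$ of polytope interiors over $\mathcal P$, which you trivialize fiberwise.

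Your route is more conceptual and makes the role of the full M\"obius group transparent, and the three ``extra'' coordinates you worried about match exactly the paper's $(\varphi^{-1}(0,0),d(\varphi))$ data after stereographic projection. The paper's route is more elementary: it reduces everything to planar Euclidean geometry and uses only the variational principle for Euclidean angle structures stated in Theorem~\ref{angle structure}, never the full Rivin--Hodgson classification of ideal polyhedra. One caution worth flagging: the ``Rivin's theorem'' you actually invoke (exterior dihedral angles parametrize convex ideal polyhedra up to isometry) is not Theorem~\ref{angle structure} but the companion result from Rivin's 1996 \emph{Annals} paper; the two are intimately related but distinct statements, so you should cite the one you are really using.
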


The topology of spaces of geodesic triangulations of surfaces has been studied since Cairns \cite{Ca}. These spaces are naturally discrete analogues of the diffeomorphism groups of smooth surfaces. It was conjectured that for constant curvature surfaces they are homotopy equivalent to their smooth counterparts by Connelly et al. \cite{CHHS}.
This conjecture has been confirmed by Bloch-Connelly-Henderson \cite{BCH} for convex polygons, and a new proof based on Tuttes' embedding theorem was provided by Luo \cite{Luo1}. Recently,
this conjecture was proved for the cases of flat tori and closed surfaces of negative curvature (see
Erickson-Lin \cite{EL} and Luo-Wu-Zhu \cite{LWZ1, LWZ2}). 

For the case of the unit sphere, Awartani-Henderson \cite{AH} identified the homotopy type of a subspace of the space of geodesic triangulations on the unit $2$-sphere, but the general case remains open. Theorem \ref{main1} provides an affirmative evidence about this conjecture, and we hope that it could be an intermediate step to prove the conjecture for the unit sphere.

\subsection{Convex polyhedra inscribed in the unit sphere}

Assume $|T|$ is homeomorphic to $\mathbb S^2$. An embedding $\varphi:|T|\rightarrow\mathbb R^3$ is called a \emph{polyhedral realization inscribed in the unit sphere} if $\varphi$ maps any vertex to the unit sphere and maps any face linearly to a Euclidean triangle. Such a polyhedral realization $\varphi$ is called (strictly) convex if for any triangle $\sigma\in F$, $\varphi(\sigma)$ is a face of the boundary of the convex hull of $\varphi(V)$ in $\mathbb R^3$. Given $T$, denote $Y(T)$ as the set of convex polyhedral realizations inscribed in the unit sphere. 

We say a point $q$ is \emph{inside} a convex polyhedral surface $P$ if $q$ is in the interior of the convex hull of $P$. Given a point $q$ in the unit open ball, denote $p_q:\mathbb R^3\backslash\{q\}\rightarrow\mathbb S^2$ as the radial projection centered at $q$ to the unit sphere.
We say two convex polyhedral realizations $\varphi_1,\varphi_2$ in $Y(T)$ \emph{have the same orientation} if and only if $p_{q_1}\circ\varphi_1$ is isotopic to $p_{q_2}\circ\varphi_2$ on $\mathbb{S}^2$ for $q_1$ inside $\varphi_1(|T|)$ and $q_2$ inside $\varphi_2(|T|)$. It is straightforward to check that the choice of $q_1$ and $q_2$ does not matter. 

Given a convex realization polyhedral realization $\psi$, we define \emph{the deformation space of convex polyhedra inscribed in the sphere} determined by $\psi$, denoted by $Y(T,\psi)\subset Y(T)$, as the set of all convex realizations $\varphi$ of $\mathbb{S}^2$ having the same orientation with $\psi$. Then $Y(T,\psi)$ is naturally a manifold of dimension $2|V|$ without boundary.
Theorem \ref{maintheorem} can be rephrased as 
\begin{theorem}
\label{main2}
Given a convex realization $\psi$, $Y(T,\psi)$ is homeomorphic to  $\mathbb R^{2|V|-3}\times SO(3)$.
\end{theorem}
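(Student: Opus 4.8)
The plan is to prove Theorem~\ref{main2} by exhibiting a homeomorphism between $Y(T,\psi)$ and $X(T,\psi)$ and then invoking Theorem~\ref{main1}. The bridge between inscribed convex polyhedra and Delaunay geodesic triangulations of $\mathbb{S}^2$ is classical and goes back to the observation that a triangle inscribed in the unit sphere, together with a fourth point, has the empty-circle property if and only if the corresponding lift to $\mathbb{R}^3$ is locally convex: a point $D\in\mathbb{S}^2$ lies strictly outside the circumcircle of $\triangle ABC\subset\mathbb{S}^2$ precisely when $D$ lies strictly below the plane through $A,B,C$. Thus, given $\varphi\in Y(T,\psi)$, radially projecting the polyhedral surface from the center $0$ of the sphere (which is automatically inside the convex hull, since all vertices lie on $\mathbb{S}^2$ and the polyhedron is convex) produces a geodesic triangulation $p_0\circ\varphi$; strict convexity of each face translates into the strict Delaunay inequality~\eqref{delaunay}, and convexity also forces each spherical triangle to lie in an open hemisphere, so the image is a strictly Delaunay convex geodesic triangulation. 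Conversely, given a strictly Delaunay convex geodesic triangulation, lifting each vertex $v$ radially to the unique point of $\mathbb{S}^2$ above it (i.e., using the vertex positions themselves as points in $\mathbb{R}^3$) and filling in flat triangles yields, by the same equivalence, a strictly convex polyhedral realization.

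First I would make precise the map $\Phi:Y(T,\psi)\to X(T,\psi)$ sending $\varphi\mapsto p_0\circ\varphi$ and check it is well-defined: that $p_0\circ\varphi$ is an embedding of $T^{(1)}$ sending edges to constant-speed geodesics (the image of a Euclidean segment between two points of $\mathbb{S}^2$ under radial projection from $0$ is the short great-circle arc, which one reparametrizes by arclength), that the convexity hypothesis gives the open-hemisphere condition, and that the empty-circle property holds via the plane/circle correspondence above. The orientation condition in the definition of $Y(T,\psi)$ matches the isotopy condition in the definition of $X(T,\psi)$ essentially by construction, since $p_0\circ\varphi$ isotopic to $p_0\circ\psi$ is exactly "same orientation'' with $q_1=q_2=0$. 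Second I would construct the inverse $\Psi:X(T,\psi)\to Y(T,\psi)$ sending a strictly Delaunay convex geodesic triangulation to the polyhedron whose vertices are the (already spherical) vertex images and whose faces are the flat triangles they span; strict convexity of each face is again the strict Delaunay inequality, and one must check $\Psi(\varphi)$ is a genuine embedding of $|T|$ (no self-intersections), which follows because the radial projection from $0$ of this polyhedral surface recovers the original embedded geodesic triangulation. Third, I would verify $\Phi$ and $\Psi$ are mutually inverse — immediate, since both $X(T,\psi)$ and $Y(T,\psi)$ are, in effect, parametrized by the same data, namely the positions of the $|V|$ vertices on $\mathbb{S}^2$ subject to the open strict-Delaunay/strict-convexity conditions — and that both are continuous in the natural (e.g., $C^0$, equivalently vertex-position) topologies. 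Finally, applying Theorem~\ref{main1} gives $Y(T,\psi)\cong X(T,\psi)\cong \mathbb{R}^{2|V|-3}\times SO(3)$.

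The main obstacle, and the step deserving the most care, is verifying that the correspondence is bijective at the level of the \emph{global} embedding condition rather than just the local face-by-face convexity/Delaunay condition. On the polyhedron side one must know that a realization in which every face is a face of the convex hull of $\varphi(V)$ — i.e., locally convex at every edge — is automatically globally convex and embedded; on the triangulation side one must know that a geodesic triangulation satisfying the strict Delaunay inequality at every edge, with every triangle in an open hemisphere, is indeed an embedding isotopic to $\psi$ rather than merely a locally injective map. Both facts are standard (the first is the statement that a closed locally convex polyhedral surface homeomorphic to $\mathbb{S}^2$ bounds a convex body; the second is built into the definition of $X(T,\psi)$ as a subset of genuine triangulations), so in practice the "obstacle'' is mostly bookkeeping: one should state clearly that $\Phi$ and $\Psi$ land in the correct spaces and that the constructions do not accidentally leave the isotopy class. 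I would also double-check that the dimension count and the manifold-without-boundary claims for $Y(T,\psi)$ are consistent with those for $X(T,\psi)$, which they are since the homeomorphism $\Phi$ transports one structure to the other.
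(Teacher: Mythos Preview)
Your argument contains a genuine gap: the claim that the origin is ``automatically inside the convex hull, since all vertices lie on $\mathbb{S}^2$ and the polyhedron is convex'' is false. Take four points on $\mathbb{S}^2$ clustered near the north pole; their convex hull is a small tetrahedron far from the origin, yet each of its four faces is a face of the hull, so this is a perfectly good element of $Y(T)$. More generally, nothing in the definition of a convex polyhedral realization inscribed in $\mathbb{S}^2$ forces the origin to lie in the interior. Consequently your map $\Phi=p_0\circ(\cdot)$ is not defined on all of $Y(T,\psi)$: when the origin is outside $\varphi(|T|)$, rays from $0$ meet the surface twice or not at all, so $p_0\circ\varphi$ fails to be an embedding of $T^{(1)}$ into $\mathbb{S}^2$. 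This is precisely why the paper introduces the proper subspace $Y_0(T,\psi)\subset Y(T,\psi)$ of realizations containing the origin; the radial projection gives a homeomorphism $Y_0(T,\psi)\cong X(T,p_0\circ\psi|_{T^{(1)}})$, and this is how Theorem~\ref{main1} is reformulated as Theorem~\ref{main12}. So your construction actually reproves that equivalence, not Theorem~\ref{main2}.

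There is also a logical issue: in the paper's development, Theorem~\ref{main1} is not proved independently but is deduced from Theorem~\ref{main12} (about $Y_0$), which in turn is established alongside Theorem~\ref{main2} in Section~4. So invoking Theorem~\ref{main1} to obtain Theorem~\ref{main2} would be circular within this paper. The paper's actual proof of Theorem~\ref{main2} avoids the origin altogether: it fixes a vertex $v_0$ at the north pole and uses the \emph{stereographic} projection to identify $Y(T,\psi,v_0)$ with the space $Z(T_0,\tilde\pi(\psi))$ of strictly Delaunay triangulations of a convex polygon (Lemma~\ref{proj}(d)); the remaining $SO(3)$ factor records the rotation needed to bring $v_0$ to the north pole and to align a chosen edge (Lemma~\ref{proj}(f)), and Corollary~\ref{tri of polygon} identifies the polygon factor with $\mathbb{R}^{2|V|-1}$. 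Note that $2|V|-1 = 2(|V|-1)+1$ counts the vertices of $T_0$, not $T$, but after removing the rotation about the $z$-axis one gets $\mathbb{R}^{2|V|-3}\times SO(3)$ as claimed.
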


The space of inscribed polyhedra is related to realization spaces of polytopes with a fixed combinatorial type. Steinitz \cite{St} proved that the realization space of polyhedra is a cell after standard normalization. See \cite{RG} for a detailed discussion about the realization spaces.

\subsection{Connections between the two spaces}

Denote $Y_0(T)$ as the subset of $Y(T)$ containing all the convex realizations $\varphi$ such that the origin $O=(0,0,0)$ is inside $\varphi(|T|)$. Given a convex realization $\psi$, denote $Y_0(T,\psi)=Y(T,\psi)\cap Y_0(T)$.
If $\varphi\in Y_0$, then the radial projection $p_O$ maps the triangulation structure on $\varphi(|T|)$ to a strictly Delaunay convex geometric triangulation of $\mathbb S^2$. This naturally gives a homeomorphism from $Y_0(T,\psi)$ to $X(T,p_O\circ\psi|_{T^{(1)}})$, for any convex realization $\psi$. Therefore, Theorem \ref{main1} can be reformulated as
\begin{theorem}
\label{main12}
Given a convex realization $\psi\in Y_0$, $Y_0(T,\psi)$ is homeomorphic to  $\mathbb R^{2|V|-3}\times SO(3)$.
\end{theorem}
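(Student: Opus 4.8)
The plan is to reduce Theorem~\ref{main12} to Theorem~\ref{main2}, which is then proven via the variational principle of Rivin. Since the radial projection $p_O$ induces a homeomorphism between $Y_0(T,\psi)$ and $X(T, p_O\circ\psi|_{T^{(1)}})$ as explained above, and since $Y_0(T,\psi) \subset Y(T,\psi)$, the first task is to show that the inclusion $Y_0(T,\psi) \hookrightarrow Y(T,\psi)$ is in fact a homotopy equivalence, or better, that there is a deformation retraction or a product decomposition $Y(T,\psi) \cong Y_0(T,\psi) \times B^3$ where $B^3$ is the open unit ball parametrizing the possible locations of the origin relative to the barycenter of $\varphi(V)$. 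The idea here is that given any $\varphi \in Y(T,\psi)$, the interior of the convex hull of $\varphi(V)$ is a nonempty open convex set, and we can translate $\varphi$ so that $O$ lands at any prescribed interior point; the ``amount'' of translation needed to center it (say, to move the centroid of $\varphi(V)$ to $O$) gives the $B^3$ (or $\mathbb{R}^3$) factor. So I would first carefully set up the map $Y(T,\psi) \to Y_0(T,\psi) \times \mathbb{R}^3$, $\varphi \mapsto (\varphi - c(\varphi), c(\varphi))$ where $c(\varphi)$ is the centroid of $\varphi(V)$, check it is a homeomorphism onto its image, and identify the image. Combined with Theorem~\ref{main2}, this gives $Y_0(T,\psi) \cong \mathbb{R}^{2|V|-3}\times SO(3)$ after absorbing the extra Euclidean factors, noting that $\mathbb{R}^{2|V|-3} \times \mathbb{R}^3 \cong \mathbb{R}^{2|V|}$ does not match dimensions, so one must be careful: actually $Y_0(T,\psi)$ has dimension $2|V|-3$ and $Y(T,\psi)$ has dimension $2|V|$, consistent with an $\mathbb{R}^3$-bundle structure.

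Alternatively — and this is cleaner — I would prove Theorem~\ref{main12} directly from Theorem~\ref{main2} by observing that $Y_0(T,\psi)$ is a codimension-$3$ submanifold cut out by the three equations $c(\varphi) = O$, and that the group $\mathbb{R}^3$ of translations acts freely on $Y(T,\psi)$ with $Y_0(T,\psi)$ a global slice (every orbit meets $Y_0(T,\psi)$ in exactly one point, since the convex hull interior is nonempty and convex so there is a unique translate with centroid at $O$). Hence $Y(T,\psi) \cong Y_0(T,\psi) \times \mathbb{R}^3$ as spaces. If Theorem~\ref{main2} gives $Y(T,\psi)\cong \mathbb{R}^{2|V|-3}\times SO(3)$, then — wait, dimensions again force $Y_0(T,\psi) \times \mathbb{R}^3 \cong \mathbb{R}^{2|V|-3}\times SO(3)$, which is impossible since the right side has dimension $2|V|-3+3 = 2|V|$ matching $\dim Y(T,\psi)$, good, so $Y_0(T,\psi)$ has dimension $2|V|-3$ but we cannot simply cancel the $\mathbb{R}^3$. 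The correct conclusion is that $Y_0(T,\psi)$ must absorb into one of the factors: since $SO(3)$ has no $\mathbb{R}^3$ factor to split off, the splitting $\mathbb{R}^{2|V|}\times SO(3) \cong Y_0(T,\psi)\times\mathbb{R}^3$ combined with the fact that $SO(3)\times\mathbb{R}^3$ is the tangent bundle of $SO(3)$... this suggests instead that the right order of logic is the reverse: prove Theorem~\ref{main12} first and deduce Theorem~\ref{main2}.

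So the real plan: establish Theorem~\ref{main12} as the core result, then deduce Theorem~\ref{main1} for free via the homeomorphism $Y_0(T,\psi) \cong X(T, p_O\circ\psi|_{T^{(1)}})$, and deduce Theorem~\ref{main2} via the translation splitting $Y(T,\psi)\cong Y_0(T,\psi)\times\mathbb{R}^3 \cong \mathbb{R}^{2|V|-3}\times SO(3)\times\mathbb{R}^3 \cong \mathbb{R}^{2|V|}\times SO(3)$. For the core result on $Y_0(T,\psi)$, I would use Rivin's variational characterization: a convex inscribed polyhedron is determined up to the action of the conformal (Möbius) group of $\mathbb{S}^2$ by its exterior dihedral angles, which must satisfy a system of linear equalities and strict inequalities defining an open convex polytope $P$ of dimension $|E| - |V|$ (or the appropriate count), and Rivin's theorem says the map from $Y_0$ modulo Möbius transformations to $P$ is a homeomorphism. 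The Möbius group $\mathrm{PSL}(2,\mathbb{C})$ is homotopy equivalent to $SO(3)$ and is itself homeomorphic to $SO(3)\times\mathbb{R}^3$. Then $Y_0(T,\psi)$ fibers over the contractible convex polytope $P$ with fiber the Möbius group (acting freely, since a nontrivial Möbius transformation cannot fix an inscribed polyhedron), giving a trivial bundle $Y_0(T,\psi)\cong P \times \mathrm{Mob} \cong \mathbb{R}^{\dim P}\times SO(3)\times \mathbb{R}^3$, and a dimension count $\dim P + 3 + 3 = 2|V|-3$ forces $\dim P = 2|V|-9 = |E| - |V|$ (using $|E| = 3|V|-6$), which checks out.

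The main obstacle I expect is making the variational principle genuinely rigorous in this setting: Rivin's theorem is usually stated for the characterization of ideal hyperbolic polyhedra or for inscribed polyhedra up to Möbius equivalence, and one must (i) verify that the relevant space of angle data is exactly an open convex polytope (requiring the strict Delaunay inequalities, which is where hypothesis \eqref{delaunay} enters), (ii) verify that Rivin's map is not just a bijection but a homeomorphism — properness of the map, or an open-and-closed argument on the fibers, is the delicate analytic point — and (iii) handle the triviality of the Möbius bundle, i.e., show the quotient map $Y_0(T,\psi) \to Y_0(T,\psi)/\mathrm{Mob} \cong P$ admits a global section, which should follow from contractibility of $P$ together with the fact that the action is proper and free so the quotient is a manifold and the bundle is principal — but a principal $\mathrm{Mob}$-bundle over a contractible base is trivial, so this is fine once properness/freeness is nailed down. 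A secondary technical point is confirming that ``isotopic to $\psi$'' on the triangulation side corresponds precisely to ``same orientation as $\psi$'' on the polyhedron side and that both conditions are preserved by the constructions — this is essentially the content of the paragraph identifying $Y_0(T,\psi)$ with $X(T, p_O\circ\psi|_{T^{(1)}})$, so I would lean on that.
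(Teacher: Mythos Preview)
Your proposal has two genuine errors. First, translations do not act on $Y(T,\psi)$: every element is inscribed in the \emph{fixed} unit sphere, so translating moves vertices off the sphere and out of the space entirely. There is no splitting $Y(T,\psi)\cong Y_0(T,\psi)\times\mathbb R^3$, and $Y_0(T,\psi)$ is not a codimension-$3$ slice but an \emph{open subset} of $Y(T,\psi)$ (the condition ``$O$ lies in the interior of the convex hull'' is open). Both spaces have dimension $2|V|$. This also breaks your final dimension count: the correct relation is $\dim P+6=2|V|$, giving $\dim P=2|V|-6=|E|-|V|$, not $2|V|-9$; your ``checks out'' is off by $3$.

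Second, the M\"obius group does not act on $Y_0(T,\psi)$: a hyperbolic isometry moving the origin can push it outside the polyhedron, so $Y_0$ is not M\"obius-invariant and $Y_0(T,\psi)\to P$ is not a principal $\mathrm{Mob}$-bundle. The actual fiber over a point of $P$ is $\{g\in\mathrm{Mob}:g^{-1}(O)\text{ lies in the interior of the ideal polyhedron}\}$, which is homeomorphic to $SO(3)$ times an open convex subset of $\mathbb H^3$. So your approach can be repaired, but you still owe an argument that this family is a locally trivial (hence, over contractible $P$, globally trivial) fiber bundle. The paper avoids all of this: it fixes one vertex at the north pole, peels off the $SO(3)$ factor via rotations, and uses \emph{stereographic} projection to convert the remaining slice into strictly Delaunay triangulations of a planar convex polygon with one edge direction normalized; Rivin's principle enters only in its Euclidean angle-structure form (Lemma~\ref{ae}) on the polygon, and the ``origin inside'' condition becomes a power-of-a-point inequality (Lemma~\ref{proj}(e)) that cuts out an interval in a single scale parameter, handled by an explicit reparametrization.
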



\subsection{Organization and acknowledgement} In Section $2$, we will introduce the concept of angles structures. In Section $3$, we will determine the topology of the spaces of Delaunay triangulations of convex polygons with fixed angles. In Section $4$, we will prove Theorem \ref{main2} and Theorem \ref{main12}. In Section $5$, we will provide examples showing the homotopy types of spaces of Delaunay triangulations of flat tori and convex polygons could be not that simple.

The authors would like to thank Professor Jeff Erickson for his insightful comments.





\section{Angle structures on triangulated surfaces}
The key concept to study spaces of Delaunay triangulations is \emph{angle structure} on triangulated surfaces. This concept was proposed by Colin de Vedi\`ere \cite{CdV}, and developed by Rivin \cite{R}, Leibon \cite{Le},   Luo \cite{Luo}, Bobenko-Springborn \cite{BS}, and others. We briefly summarize the theory in the following. 

\subsection{Angle structures on triangulated surface}
Assume $|T|$ is a $2$-dimensional manifold possibly with boundary. 
A \textit{corner} in $T$ is defined as a vertex-face pair $(v,f)$ in $T$ such that the face $f$ contains $v$. It represents the inner angle of the face $f$ at the vertex $v$.
A  \textit{Euclidean angle structure} $\theta$, or an angle structure in short, on $T$ is a positive function on the set of the corners  such that 
$\theta_1 + \theta_2 +\theta_3 = \pi$ for the three angles in every face $f$.
Every angle structure can be presented as a positive vector in $\mathbb{R}^{3|F|}$. Denote $V_b\subset V$ as the set of boundary vertices, and then the \emph{edge invariant} $\alpha=\alpha(\theta)\in\mathbb R^{E\cup V_b}$ is defined as
\begin{enumerate}
	\item [(a)] 
	$\alpha_e=\theta_1+\theta_2$, if $e$ is an inner edge, and $\theta_1$ and $\theta_2$ are the two angles opposite to $e$, and

	\item [(b)] 
	$\alpha_e=\theta_1$, if $e$ is a boundary edge, and $\theta_1$ is the angle opposite to $e$, and

	\item [(c)] 
	$\alpha_v=\sum_{i}\theta_i$, if $v$ is a boundary vertex, and $\theta_i$'s are the angles at $v$.
\end{enumerate}
Denote the set of angle structures realizing a prescribed edge invariant $\bar\alpha\in\mathbb R^{E\cup V_b}$ as $\mathcal{A}(T, \bar\alpha)$. 

Given an edge length function $l\in\mathbb R^E$ satisfying the triangle inequalities, we can naturally determine a piecewise Euclidean metric on $T$ and induce an angle structure $\theta(l)$ using the inner angles in this piecewise Euclidean metric.
Notice that not every angle structure can be induced from a piecewise Euclidean metric.

\subsection{Variational principles of angle structures.}
Variational methods are introduced to find piecewise Euclidean surfaces with a prescribed edge invariant. The functionals in these variational principles have elegant geometric interpretations in terms of volumes of polyhedra in the hyperbolic $3$-space $\mathbb{H}^3$.

	\begin{figure}[h]
		\includegraphics[width=0.2\textwidth]{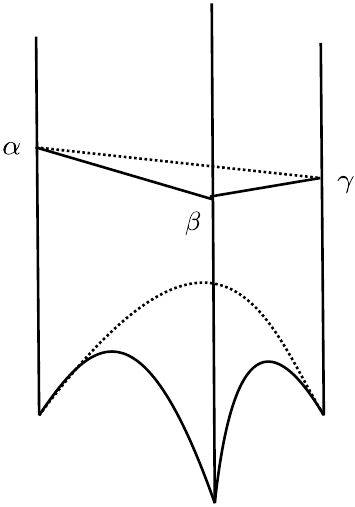}
		\caption{The volume of an ideal tetrahedron. }
			\label{energy}

	\end{figure}

For each face $f$ in $F$, an energy functional 
is defined in terms of three angles at the corners of the face in an angle structure. For a face in a Euclidean angle structure with three angles $(\alpha, \beta, \gamma)$, the energy functional is the volume of ideal hyperbolic tetrahedron whose horospherical section is similar to a Euclidean triangle with three angles $(\alpha, \beta, \gamma)$. See Figure \ref{energy}. The volume is given by 
$$V(\alpha, \beta, \gamma) = \Lambda(\alpha) + \Lambda(\beta) + \Lambda(\gamma),$$
where $\Lambda$ is the \textit{Lobachevsky function}
$$\Lambda(x) = - \int_0^x \log|2\sin \theta| d\theta.$$
The total energy for a given angle structure is defined as the sum of functionals on each face
$$\mathcal{E}(\theta) = \sum_{f_i\in F} V_i(\alpha_i, \beta_i, \gamma_i).$$
The variational principles for these energy functionals can be summarized as follows. 
\begin{theorem}{\cite{R}}
\label{angle structure}
Assume $\bar\alpha\in(0,\pi]^{E\cup V_b}$ and $\mathcal A(T,\bar\alpha)$ is nonempty, then 

\begin{enumerate}
	\item [(a)] 
the energy functional $\mathcal E$ is strictly concave down on $\mathcal{A}(T, \bar\alpha)$, and

	\item [(b)] 
there exists a unique critical point $\theta=\Theta(\bar\alpha)$ of $\mathcal E$ in $\mathcal{A}(T, \bar\alpha)$, and
	
	\item [(c)]
	$\Theta(\bar\alpha)$ is the unique angle structure in $\mathcal A(T,\bar\alpha)$ that
	could be induced from a piecewise Euclidean metric on $T$. 
\end{enumerate}

\end{theorem}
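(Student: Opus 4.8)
The plan is to prove (a) first and then deduce (b) and (c) from it, via a compactness argument and a Lagrange-multiplier computation that recognizes the Euler--Lagrange equation of $\mathcal E$ as the law of sines. For (a), note first that $\mathcal A(T,\bar\alpha)$ is a bounded, relatively open convex polytope: it is the intersection of the positive orthant of $\mathbb R^{3|F|}$ with the affine subspace defined by the face-sum equations $\theta_1+\theta_2+\theta_3=\pi$ and the linear equations $\alpha(\theta)=\bar\alpha$, and it is bounded because $0<\theta_c<\pi$ at every corner. Since $\mathcal E(\theta)=\sum_c\Lambda(\theta_c)$ is separable in the corner variables, its Hessian on $\mathbb R^{3|F|}$ is diagonal with entries $\Lambda''(\theta_c)=-\cot\theta_c$. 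The tangent space $T_\theta\mathcal A(T,\bar\alpha)$ is contained in the subspace $W$ of vectors whose three coordinates sum to zero on each face, so it suffices to prove that $q(\dot\theta):=\sum_c\cot(\theta_c)\dot\theta_c^2$ is positive definite on $W$. As $q=\sum_f q_f$ with $q_f$ depending only on the three corners of $f$, and $W$ is the orthogonal direct sum of the per-face subspaces $\{\dot\alpha+\dot\beta+\dot\gamma=0\}$, it is enough to check each $q_f$ there: eliminating $\dot\gamma$, the matrix of $q_f$ is $\left(\begin{smallmatrix}\cot\alpha+\cot\gamma&\cot\gamma\\ \cot\gamma&\cot\beta+\cot\gamma\end{smallmatrix}\right)$, and a short computation using $\gamma=\pi-\alpha-\beta$ gives determinant $1$ and diagonal entry $\cot\alpha+\cot\gamma=\sin\beta/(\sin\alpha\sin\gamma)>0$, so $q_f$ is positive definite. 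Summing over faces, $q$ is positive definite on $W$, hence on $T_\theta\mathcal A(T,\bar\alpha)$, so $\mathcal E$ is strictly concave on $\mathcal A(T,\bar\alpha)$.

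For (b), strict concavity forces at most one critical point, which is then the strict global maximum, so only existence needs an argument. Since $\Lambda$ is continuous on $[0,\pi]$ with $\Lambda(0)=\Lambda(\pi)=0$, $\mathcal E$ extends continuously to the compact polytope $\overline{\mathcal A(T,\bar\alpha)}$, hence attains its maximum at some $p^\ast$, and the claim is that $p^\ast$ is interior. Assuming instead that $p^\ast$ lies on the boundary, fix any $q\in\mathcal A(T,\bar\alpha)$; then $\theta(t):=(1-t)p^\ast+tq\in\mathcal A(T,\bar\alpha)$ for $t\in(0,1]$ (each coordinate stays strictly between $0$ and $\pi$), and $g(t):=\mathcal E(\theta(t))$ is concave on $[0,1]$, smooth on $(0,1]$, with $g(0)$ the maximum. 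One computes that $g'(0^+)>0$ (possibly $+\infty$): corners $c$ with $\theta^\ast_c\in(0,\pi)$ contribute finitely; a corner with $\theta^\ast_c=0$ contributes $+\infty$ because $\Lambda'(s)=-\log|2\sin s|\to+\infty$ as $s\to0^+$ while $\theta^q_c-\theta^\ast_c>0$; and the corners with $\theta^\ast_c\in\{0,\pi\}$ inside a single degenerate face balance against one another, their perturbations summing to zero, so that the cancellation of the $-\log t$ terms leaves a strictly positive finite remainder governed by the strict superadditivity of $x\mapsto x\log x$ on $[0,\infty)$. This contradicts maximality of $p^\ast$, so the maximum is an interior critical point, which we call $\Theta(\bar\alpha)$.

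For (c), the idea is to identify the critical-point equation with the law of sines. A point $\theta\in\mathcal A(T,\bar\alpha)$ is critical if and only if $\nabla\mathcal E(\theta)$, whose component at a corner $c$ is $\Lambda'(\theta_c)=-\log(2\sin\theta_c)$, is orthogonal to $T_\theta\mathcal A(T,\bar\alpha)$, i.e. lies in the span of the gradients of the defining equations. The gradient of the face-sum equation of $f$ is the indicator of the corners of $f$, and the gradient of the edge equation $\alpha_e=\theta'+\theta''$ is the indicator of the two corners opposite $e$; each boundary-vertex equation is dependent on the face and edge equations (its gradient is an explicit combination of theirs) and may be dropped. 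Hence $\theta$ is critical if and only if there are numbers $\mu_f$ $(f\in F)$ and $\nu_e$ $(e\in E)$ with
\[
-\log\bigl(2\sin\theta_{v,f}\bigr)=\mu_f+\nu_{e(v,f)}
\]
for every corner $(v,f)$, where $e(v,f)$ is the edge of $f$ opposite $v$. Writing $\ell_e:=e^{-\nu_e}$, this says $\ell_{e(v,f)}/\sin\theta_{v,f}=2e^{\mu_f}$ in each face, i.e. the $\ell_e$ obey the law of sines for a Euclidean triangle with the prescribed angles $\theta_{v,f}$; since $\ell_e$ depends only on $e$, these per-face triangles glue to a piecewise Euclidean metric $l=(\ell_e)$ on $T$ with $\theta(l)=\theta$. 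Conversely, if $\theta=\theta(l)$ for a piecewise Euclidean metric $l$, then taking $\mu_f=\log(D_f/2)$ with $D_f$ the circumdiameter of $f$ and $\nu_e=-\log\ell_e$ shows $\theta$ is critical. With the uniqueness from (a), it follows that $\Theta(\bar\alpha)$ is induced by a piecewise Euclidean metric and is the only angle structure in $\mathcal A(T,\bar\alpha)$ with this property.

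I expect the delicate point to be the boundary analysis in (b): $\mathcal E$ is only $C^0$ up to $\partial\overline{\mathcal A(T,\bar\alpha)}$ and its gradient blows up there, and when a face has degenerated to a straight angle the segment into the interior produces competing $+\infty$ and $-\infty$ contributions to $g'(0^+)$, so one must track the precise rate of the logarithmic blow-up---the coefficient of $-\log t$ vanishes exactly because $\sum_{c\in f}\dot\theta_c=0$ on each face---together with the finite remainder. By contrast the determinant computation in (a) is short, and the translation of the Euler--Lagrange equation into the law of sines in (c), though the conceptual core, is routine once the Lagrange-multiplier setup is written down.
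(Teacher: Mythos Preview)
The paper does not prove this theorem; it is quoted from Rivin and used as a black box, so there is no in-paper proof to compare against. Your approach is the standard one and parts (a) and (c) are correct: the per-face $2\times2$ Hessian computation with determinant $1$ and positive diagonal entry is exactly how one sees strict concavity on the slice, and the Lagrange-multiplier identification of the critical-point equation with the law of sines is clean, including the observation that the boundary-vertex constraints are redundant modulo the face and edge constraints.

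Part (b), however, has a genuine gap at precisely the point you flag as delicate. If every vanishing angle at the boundary maximizer $p^\ast$ sits in a fully degenerate face of type $(0,0,\pi)$, then the logarithmic singularities cancel face by face and $g'(0^+)$ is \emph{finite}: it equals the sum of your positive remainders over those faces \emph{plus} the contribution $\sum_{c:\,\theta^\ast_c\in(0,\pi)}(\theta^q_c-\theta^\ast_c)\,\Lambda'(\theta^\ast_c)$ from the non-degenerate corners, and you have given no reason this latter sum cannot be negative enough to make the total $\le 0$. Observe, though, that a $\pi$ angle forces $\bar\alpha_e=\pi$ on the opposite edge; hence under the strict hypothesis $\bar\alpha\in(0,\pi)^{E\cup V_b}$---which is all the paper actually uses, since the theorem is only invoked through $\mathcal A_0(T)$---no angle in the closure can equal $\pi$, every vanishing angle is isolated, $g'(0^+)=+\infty$, and your argument goes through as written. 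For the statement as literally stated with $\bar\alpha_e=\pi$ permitted, you would still need either to exclude the all-$(0,0,\pi)$ boundary configuration (e.g.\ by chasing the forced chain of degenerate faces across the $\pi$-opposite edges) or to control the non-degenerate contribution directly; neither step is in your write-up.
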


	Denote ${\mathcal A}_0(T)$ as the set of angle structures $\theta$ such that $\alpha(\theta)\in(0,\pi)^{E\cup V_b}$ and the angle sum $\sum_i\theta_i$ around any interior vertex is $2\pi$. Denote $\mathcal A_E(T)$ as the set of angle structures $\theta$ in $\mathcal A_0(T)$ that can be induced from a piecewise Euclidean metric on $T$. Notice that the angle structure induced from a Delaunay triangulation of a convex polygon in the plane belongs to $\mathcal A_E(T)$. Then by Theorem \ref{angle structure}, we have the following.
	\begin{lemma}
	\label{ae}
	If $\mathcal A_E(T)$ is nonempty, then $\mathcal A_E(T)$ is homeomorphic to $\mathbb R^k$ for some $k\geq0$.
	\end{lemma}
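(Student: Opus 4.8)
The plan is to identify $\mathcal{A}_E(T)$ with the image of $\mathcal{A}_0(T)$ under the edge-invariant map $\alpha$, and then transport the linear structure of that image back through the section $\Theta$ provided by Theorem \ref{angle structure}. First I would observe that $\mathcal{A}_0(T)$ is a (relatively open) convex polytope: it is cut out of the affine subspace $\{\theta_1+\theta_2+\theta_3=\pi \text{ on each face}\} \cap \{\text{angle sum }=2\pi\text{ around each interior vertex}\}$ by the strict linear inequalities $\theta>0$ componentwise and $\alpha(\theta)<\pi$ componentwise. Hence $\mathcal{A}_0(T)$, if nonempty, is homeomorphic to some $\mathbb{R}^N$, and the edge-invariant map $\alpha$ restricted to it is the restriction of an affine map, so its image $\bar{\mathcal{A}} := \alpha(\mathcal{A}_0(T)) \subset (0,\pi)^{E\cup V_b}$ is a nonempty relatively open convex set, hence homeomorphic to $\mathbb{R}^k$ for some $k\geq 0$ (with $k$ the dimension of the affine span of the image).

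Next I would use Theorem \ref{angle structure} to show that $\Theta$ furnishes a homeomorphism $\bar{\mathcal{A}} \to \mathcal{A}_E(T)$. For each $\bar\alpha \in \bar{\mathcal{A}} \subset (0,\pi)^{E\cup V_b}$, part (b) gives a unique critical point $\Theta(\bar\alpha) \in \mathcal{A}(T,\bar\alpha)$, and part (c) says $\Theta(\bar\alpha)$ is the unique angle structure in $\mathcal{A}(T,\bar\alpha)$ induced from a piecewise Euclidean metric. By definition $\mathcal{A}_E(T)$ consists exactly of the angle structures in $\mathcal{A}_0(T)$ induced from a piecewise Euclidean metric; since the fiber $\mathcal{A}(T,\bar\alpha)$ for $\bar\alpha \in \bar{\mathcal{A}}$ lies inside $\mathcal{A}_0(T)$ and contains exactly one such structure, the map $\bar\alpha \mapsto \Theta(\bar\alpha)$ is a bijection from $\bar{\mathcal{A}}$ onto $\mathcal{A}_E(T)$, with inverse the restriction of $\alpha$. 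It remains to check continuity of $\Theta$: this follows from the standard argument that the critical point of a smooth strictly concave function depending continuously on a parameter varies continuously (one can phrase it via the implicit function theorem where $\mathcal{E}$ is smooth, or by a compactness/uniqueness argument near the boundary), together with the fact that $\alpha$ is continuous. Composing, $\mathcal{A}_E(T) \cong \bar{\mathcal{A}} \cong \mathbb{R}^k$.

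The main obstacle I expect is verifying that $\Theta$ is continuous as a map into $\mathcal{A}_0(T)$, because the energy $\mathcal{E}$ (through the Lobachevsky function) has derivative blowing up as angles approach $0$, so one cannot simply invoke the implicit function theorem on a compact closure without care near the boundary of the angle-structure polytope. The clean way around this is to note that the critical point $\Theta(\bar\alpha)$ is the unique maximizer of $\mathcal{E}$ on $\mathcal{A}(T,\bar\alpha)$, that $\mathcal{E}$ extends continuously to the closure with the strict concavity forcing the maximizer into the interior, and then apply the general fact that $\arg\max$ of a continuously-varying family of strictly concave upper-semicontinuous functions on a continuously-varying family of convex sets is continuous; alternatively one cites the relevant continuity statement already established in Rivin \cite{R}. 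A secondary, purely bookkeeping point is confirming that $k \geq 0$ (rather than needing $k \geq 1$), which is immediate since a single point is the case $k=0$; the lemma is stated with $k \geq 0$ precisely to allow the degenerate situation where $\bar{\mathcal{A}}$ is a point.
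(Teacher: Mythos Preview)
Your proposal is correct and follows essentially the same route as the paper: show that $\alpha(\mathcal{A}_0(T))$ is a nonempty open convex set (hence homeomorphic to $\mathbb{R}^k$) and that $\Theta$ is a continuous inverse to $\alpha|_{\mathcal{A}_E(T)}$. The one assertion you leave implicit---that the fiber $\mathcal{A}(T,\bar\alpha)$ for $\bar\alpha\in\alpha(\mathcal{A}_0(T))$ actually lies in $\mathcal{A}_0(T)$, so that $\Theta(\bar\alpha)\in\mathcal{A}_E(T)$---is precisely what the paper singles out and justifies via the identity $\sum_{f\ni v}\theta_{v,f}=\sum_{f\ni v}\pi-\sum_{e\ni v}\alpha_e$, which shows the interior-vertex angle sum is determined by the edge invariant.
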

\begin{proof}
If $\mathcal A_E(T)$ is nonempty, then $\mathcal A_0(T)$ is nonempty. From the definition we can see that $\mathcal A_0(T)$ is an open convex subset in an affine subspace of $\mathbb R^{3|F|}$. Then its image $\alpha(\mathcal A_0(T))$ under the edge invariant map $\alpha$, which is a linear map, is an open convex subset of an affine subspace of $\mathbb R^{E\cup V_b}$. Hence,  $\alpha(\mathcal A_0(T))$ is homeomorphic to $\mathbb R^k$ for some $k\geq0$.

It remains to show that
$\bar\alpha\mapsto\Theta(\bar\alpha)$ is a homeomorphism from $\alpha(\mathcal A_0(T))$ to $\mathcal A_E(T)$. It is straightforward to show that such a map is continuous from $\alpha(\mathcal A_0(T))$ to $\mathbb R^{3|F|}$. Moreover, $\bar\alpha\mapsto\Theta(\bar\alpha)\mapsto\alpha(\Theta(\bar\alpha))$ is the identity map on $\alpha(\mathcal A_0(T))$.
By Theorem \ref{angle structure}, $\theta\mapsto\alpha(\theta)\mapsto\Theta(\alpha(\theta))$ is the identity map on $\mathcal A_E(T)$. 
Then we only need to show that the image $\Theta(\bar\alpha)$ is in $\mathcal A_E(T)$. By the definition we only need to verify that for any interior vertex $v$, the angle sum
around $v$ in $\Theta(\bar\alpha)$ is equal to the angle sum around $v$ in $\theta$. 
This is because the angle sum of an angle structure $\theta$ around an interior vertex $v$ is determined by the edge invariant $\alpha(\theta)$ as the following.
$$
\sum_{f\in F: f\ni v}\theta_{v,f}=\sum_{f\in F: f\ni v}\pi-\sum_{e\in E:e\ni v}\alpha_e.
$$
\end{proof}
\section{Delaunay Triangulations of Convex Polygons}
Assume that $|T|$ is homeomorphic to a closed disk, an embedding $\varphi:|T|\rightarrow \mathbb R^2$ is called a \emph{triangulation of a polygon} if $\varphi$ is linear on any triangle of $T$. Further such $\varphi$ is called a \emph{triangulation of a convex polygon} if the inner angle of the polygon $\varphi(|T|)$ at $\varphi(v_i)$ is less than $\pi$ for any boundary vertex $v_i$ of $T$. Such $\varphi$ is called 
\textit{strictly Delaunay} if for any pair of adjacent triangles $\triangle ABC$ and $\triangle ABD$ in $\varphi(T)$, $D$ is strictly outside the circumcircle of $\triangle ABC$.
This condition is equivalent to that 
$ a + a'< \pi$,
where $a, a'$ are the inner angles of two neighbored triangles as in Figure \ref{edge}. 

Denote $\theta(\varphi)$ as the angle structure induced from the triangulation $\varphi$, and 
$Z(T)=\{\varphi:\theta(\varphi)\in \mathcal A_E(T)\}$ as the set of strictly Delaunay triangulations of a convex polygon. We say two embeddings $\varphi,\psi$ from $|T|$ to $\mathbb R^2$ have the same orientation if $\psi\circ\varphi^{-1}$ is an orientation preserving map on $\varphi(|T|)$. Given a triangulation $\psi$ of a polygon, denote $Z(T,\psi)$ as the set of strictly Delaunay triangulations $\varphi$ of a convex polygon that have the same orientation with $\psi$. 

Furthermore, if we are given a directed edge $e_{ij}$ of $T$, denote $Z(T,\psi,e_{ij})$ as the set of strictly Delaunay triangulations $\varphi\in Z(T,\psi)$  satisfying that $\varphi(j)-\varphi(i)=(\lambda,0)$ for some $\lambda>0$. Then it is elementary to see that a triangulation in $Z(T,\psi,e_{ij})$ is uniquely determined by the induced angle structure $\theta(\varphi)$, and $\varphi(i)$, and $\varphi(j)-\varphi(i)$. Therefore, $\varphi\mapsto (\theta(\varphi),\varphi(i),\varphi(j)-\varphi(i))$ gives a homeomorphism from $Z(T,\psi,e_{ij})$ to $\mathcal A_E(T)\times\mathbb R^2\times\mathbb R_+$. 
On the other hand, the space $Z(T,\psi,e_{ij})$ is a $(2|V|-1)$-dimensional manifold if not empty, then
we have the following from Lemma \ref{ae}.
\begin{corollary}
\label{tri of polygon}
Given any Delaunay triangulation of a convex polygon $\psi$, and a directed edge $e_{ij}$, $Z(T,\psi,e_{ij})$ is homeomorphic to $\mathbb R^{2|V|-1}$.
\end{corollary}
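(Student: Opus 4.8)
The plan is to build the claimed homeomorphism $Z(T,\psi,e_{ij})\cong\mathbb R^{2|V|-1}$ directly out of the two ingredients already assembled in the excerpt: the parametrization map
$$\Phi:\varphi\mapsto\bigl(\theta(\varphi),\varphi(i),\varphi(j)-\varphi(i)\bigr),$$
and Lemma \ref{ae}, which identifies $\mathcal A_E(T)$ with $\mathbb R^k$ for some $k\ge 0$. First I would verify that $\Phi$ is well-defined, i.e. that for $\varphi\in Z(T,\psi,e_{ij})$ the induced angle structure $\theta(\varphi)$ really lies in $\mathcal A_E(T)$: the polygon triangulation gives a piecewise Euclidean metric, the strict Delaunay condition $a+a'<\pi$ on every interior edge is exactly the requirement $\alpha(\theta(\varphi))\in(0,\pi)^{E\cup V_b}$ on interior edges, convexity of the polygon handles the boundary-vertex coordinates of $\alpha$, and flatness of $\mathbb R^2$ forces the angle sum around each interior vertex to be $2\pi$. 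So $\theta(\varphi)\in\mathcal A_E(T)$, and the target of $\Phi$ is $\mathcal A_E(T)\times\mathbb R^2\times\mathbb R_+$.

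Next I would check that $\Phi$ is a bijection onto $\mathcal A_E(T)\times\mathbb R^2\times\mathbb R_+$. Injectivity is the elementary ``developing map'' observation: once we fix the angle structure, the position of $\varphi(i)$, and the vector $\varphi(j)-\varphi(i)$ (which, up to the positive scalar $\lambda$, is pinned to the positive $x$-axis by the definition of $Z(T,\psi,e_{ij})$), every other vertex image is determined by laying out triangles one at a time across shared edges — this is consistent precisely because the angle structure is induced from a genuine piecewise Euclidean metric with $2\pi$ cone angles at interior vertices, so there is no holonomy obstruction. Surjectivity runs the same construction in reverse: given $\theta\in\mathcal A_E(T)$, a basepoint in $\mathbb R^2$, and a positive $\lambda$, develop the metric into the plane to produce a map $\varphi$; one must then argue that $\varphi$ is actually an \emph{embedding} onto a convex polygon and has the same orientation as $\psi$. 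Continuity of $\Phi$ and of $\Phi^{-1}$ is routine since the developing construction depends smoothly on $(\theta,\varphi(i),\lambda)$.

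Finally, I would conclude. By Lemma \ref{ae}, nonemptiness of $Z(T,\psi,e_{ij})$ (equivalently, of $\mathcal A_E(T)$, via $\psi$ itself after applying an isometry of $\mathbb R^2$ to normalize the edge $e_{ij}$) gives $\mathcal A_E(T)\cong\mathbb R^k$, so
$$Z(T,\psi,e_{ij})\cong \mathbb R^k\times\mathbb R^2\times\mathbb R_+\cong\mathbb R^{k+3}.$$
On the other hand $Z(T,\psi,e_{ij})$ is a $(2|V|-1)$-dimensional manifold (the excerpt asserts this; it comes from counting $2|V|$ free vertex coordinates and subtracting the one constraint that $\varphi(j)-\varphi(i)$ point along the positive $x$-axis), so invariance of domain forces $k+3=2|V|-1$, giving the result.

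The main obstacle is the surjectivity/embedding step: showing that the planar development of an arbitrary angle structure in $\mathcal A_E(T)$ is a genuine embedding with convex image and the correct orientation, rather than merely an immersion or a branched map. The angle-sum condition kills local branching at interior vertices and the convexity constraint on the boundary vertices plus the polygon structure should prevent global overlapping, but making this rigorous is where the real work lies; the concavity/uniqueness input of Theorem \ref{angle structure} is what guarantees that the $\theta$ we develop is the \emph{metric} angle structure and hence develops consistently, so that theorem is doing the heavy lifting behind the scenes. Everything else — well-definedness, injectivity, the final dimension count — is bookkeeping.
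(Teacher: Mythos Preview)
Your proposal is correct and follows the paper's approach exactly: the paper simply asserts that $\varphi\mapsto(\theta(\varphi),\varphi(i),\varphi(j)-\varphi(i))$ is a homeomorphism onto $\mathcal A_E(T)\times\mathbb R^2\times\mathbb R_+$ and then invokes Lemma~\ref{ae} together with the dimension count. The surjectivity/embedding step you flag as the main obstacle is precisely what the paper dismisses as ``elementary''; it can be settled by observing that the developed boundary is a closed planar polygonal curve with every exterior angle positive and total turning $2\pi$ (Gauss--Bonnet on the flat disk), hence a simple convex curve bounding an embedded convex region.
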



\section{Proof of the Main Theorems}
It is well known that the stereographic projection 
$$
\pi:(x,y,z)\mapsto\left(\frac{x}{1-z},\frac{y}{1-z}\right)
$$
gives an angle-preserving diffeomorphism from $\mathbb S^2\backslash\{(0,0,1)\}$ to $\mathbb R^2$. For a circle $\Gamma$ on $\mathbb S^2$, the stereographic projection maps $\Gamma$ to a circle on $\mathbb R^2$ if $\Gamma$ does not contain $(0,0,1)$, and maps $\Gamma\backslash\{(0,0,1)\}$ to a straight line in $\mathbb R^2$ if $\Gamma$ contains $(0,0,1)$.

We also need to recall the concept of \textit{power of a point from a circle}.  Given a point $p$ and a circle $C$ of radius $R$ centered at $q$ in the plane, the power of $p$ from $C$ is defined as 
$$power(p, C) = d^2(p,q) - R^2,$$
where $d(p,q)$ is the Euclidean distance between $p$ and $q$. It reflects the relative distance between a point and a circle. Notice that $power(p, C)<0$ if $p$ is in the interior of $C$.

Assume $|T|$ is homeomorphic to $\mathbb S^2$, and $v_0$ is a vertex of $T$, and $\psi\in Y(T)$ is a convex realization inscribed in the unit sphere.
Denote $Y(T,\psi,v_0)$ (resp. $Y(T,v_0)$, $Y_0(T,\psi,v_0)$, $Y_0(T,v_0)$) as the set of $\varphi\in Y(T,\psi)$ (resp. $\varphi\in Y(T)$, $Y_0(T,\psi)$, $Y_0(T)$) with $\varphi(v_0)=(0,0,1)$.

	\begin{figure}[h]
		\includegraphics[width=0.8\textwidth]{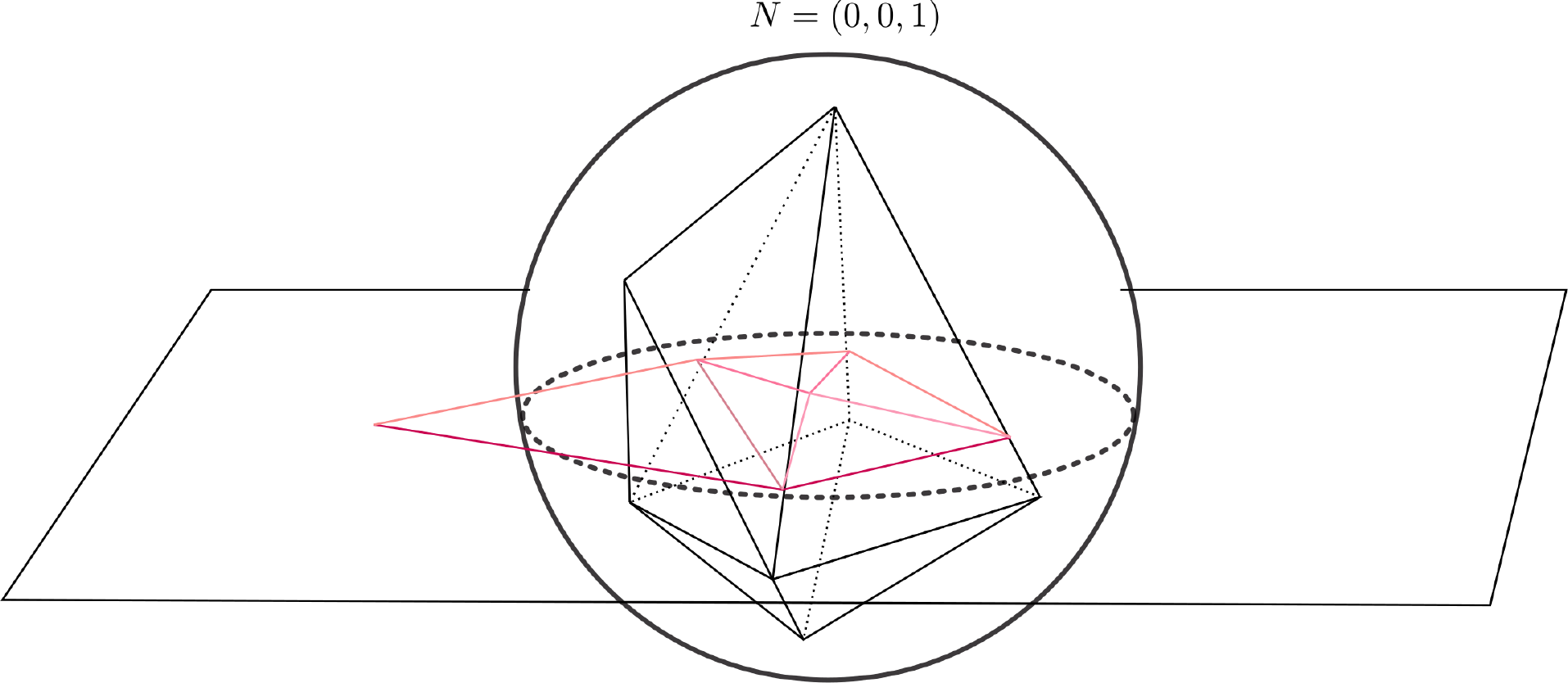}
		\caption{The stereographic projection of an inscribed convex polyhedron. }
			\label{projection}

	\end{figure}

\begin{lemma}
\label{proj}
Assume $|T|$ is homeomorphic to $\mathbb S^2$, $v_0$ is a vertex of $T$, $T_0$ denotes the subcomplex of $T$ obtained by removing the open $1$-ring neighborhood of $v_0$, and $e_{ij}$ is a directed edge in $T_0$.
\begin{enumerate}
\item[(a)] 
 There exists a map $\tilde{\pi}: Y(T, v_0) \to Z(T_0)$ induced by $\pi$ such that $\phi=\tilde \pi(\varphi)$ is the strictly Delaunay triangulation of a convex polygon determined by $\phi(v)=\pi(\varphi(v))$ for any vertex $v$ of $T_0$ (See Figure \ref{projection}).
	\item[(b)] 
 There exists a map $\tilde \eta: Z(T_0) \to  Y(T, v_0)$ induced by $\pi^{-1}$ such that $\varphi = \tilde \eta (\phi)$ is the convex realization determined by $\varphi(v)=\pi^{-1}(\phi(v))$ for any vertex $v$ of $T_0$.
	
	\item[(c)] $\tilde \pi$ and $\tilde \eta$ are inverse to each other and then $\tilde \pi$ is a homeomorphism from $Y(T,v_0)$ to $Z(T_0)$.
	
	\item[(d)] Given a convex realization $\psi\in Y(T,v_0)$, $\tilde \pi$ gives a homeomorphism from $Y(T,\psi,v_0)$ to $Z(T_0,\tilde \pi(\psi))$.
	
		\item[(e)] $\tilde \pi (Y_0(T,v_0))$ contains all the strictly Delaunay triangulations $\phi\in Z(T_0)$ satisfying that

(i) the origin $(0,0)$ is in the interior of $\phi(|T_0|)$, and

(ii) the power of $(0,0)$ with respect to the circumcircle of the triangles in $\phi(T_0)$ containing $(0,0)$ is greater than $-1$.
		
	\item[(f)] For any $\varphi\in Y(T,\psi)$, there exists a unique $\varphi_0\in Y(T,\psi,v_0)$ and $g\in SO(3)$, such that $\varphi=g\circ\varphi_0$ and $\tilde \pi(\varphi_0)\in Z(T_0,\tilde \pi(\psi),e_{ij})$. Consequently, $Y(T,\psi)$ is homeomorphic to $Z(T_0,\tilde \pi(\psi),e_{ij})\times SO(3)$.
	
	\item[(g)] For any $\varphi\in Y_0(T,\psi)$, there exists a unique $\varphi_0\in Y_0(T,\psi,v_0)$ and $g\in SO(3)$, such that $\varphi=g\circ\varphi_0$ and $\tilde \pi(\varphi_0)\in Z(T_0,\tilde \pi(\psi),e_{ij})$. Consequently, $Y_0(T,\psi)$ is homeomorphic to $(\tilde \pi (Y_0(T,v_0))\cap Z(T_0,\tilde \pi(\psi),e_{ij}))\times SO(3)$.
\end{enumerate}
\end{lemma}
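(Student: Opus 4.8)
\section*{Proof proposal for Lemma \ref{proj}}

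The plan is to push the whole picture through the stereographic projection $\pi$ from the north pole $N=(0,0,1)=\varphi(v_0)$ and read off the statements from Section 3. (We may assume $\psi\in Y(T,v_0)$ after a rotation, since $Y(T,\psi)$ depends only on the orientation class of $\psi$.) Write $S=(0,0,-1)$, $O=(0,0,0)$; for $\varphi\in Y(T,v_0)$ let $P_\varphi=\operatorname{conv}\varphi(V)$ and let $D_\varphi\subset\partial P_\varphi$ be the union of the faces of $\varphi$ \emph{not} containing $v_0$ (a disk, since $T_0$ is a disk). Let $\rho$ denote central projection from $N$ onto $\{z=0\}$, defined on points of height $<1$. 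The elementary facts to be used are: $\rho$ restricted to $\mathbb S^2\setminus\{N\}$ equals $\pi$; $\pi$ sends a circle avoiding $N$ to a circle and a circle through $N$ to a line; for a plane $\Pi\ni N$, $\rho$ carries $\Pi\cap\{z<1\}$ into a line and the two open halfspaces of $\Pi$ (within $\{z<1\}$) to the two open halfplanes; and for a plane $\Pi\not\ni N$, $\rho$ is injective and projective on $\Pi\cap\{z<1\}$.

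\emph{Parts (a), (b), (c).} For (a): since $N$ is a vertex of the strictly convex polytope $P_\varphi$, every open segment from $N$ to a point of $D_\varphi$ lies in $\operatorname{int}P_\varphi$, so each ray from $N$ meets $D_\varphi$ at most once; as $D_\varphi\subset\{z<1\}$ this makes $\rho|_{D_\varphi}$ a topological embedding onto a polygonal disk. Because $\rho$ is projective on the plane of each face of $\varphi$ and agrees with the affine extension on vertices, $\rho$ and $\tilde\pi(\varphi)$ carry each face of $T_0$ onto the same triangle, so $\phi:=\tilde\pi(\varphi)$ is an embedding, and $\phi(|T_0|)=\rho(D_\varphi)$. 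For each boundary edge of $T_0$ the unique face of $\varphi$ through $v_0$ and that edge is a facet of $P_\varphi$, so $D_\varphi$ lies in one closed halfspace of its (through‑$N$) plane; applying $\rho$, the polygon $\phi(|T_0|)$ lies in one closed halfplane of the line through the corresponding boundary edge, hence is convex, and its interior angles are $<\pi$ because strict convexity of $\varphi$ along the edge $N\varphi(x_i)$ keeps the two incident through‑$v_0$ faces non‑coplanar, so $\phi(x_{i-1}),\phi(x_i),\phi(x_{i+1})$ are not collinear. Finally, for adjacent faces $\triangle ABC,\triangle ABD$ of $T_0$, strict convexity of $P_\varphi$ along $\varphi(A)\varphi(B)$ puts $\varphi(D)$ strictly on the $N$‑side of the plane of $\triangle\varphi(A)\varphi(B)\varphi(C)$, which by circle‑preservation of $\pi$ (and the fact that the cap not containing $N$ maps to the disk bounded by the image circle) means $\phi(D)$ is strictly outside the circumcircle of $\triangle\phi(A)\phi(B)\phi(C)$; thus $\phi\in Z(T_0)$. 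Conversely (b): given $\phi\in Z(T_0)$ put $\varphi(x)=\pi^{-1}(\phi(x))$ on vertices of $T_0$ and $\varphi(v_0)=N$. Using that strict local Delaunay implies the global empty‑circle property on a convex polygon, every face $\triangle\phi(A)\phi(B)\phi(C)$ has all other projected vertices strictly outside its circumcircle; pulling back, the plane of $\triangle\varphi(A)\varphi(B)\varphi(C)$ has all other vertices of $\varphi$, and $N$ (never on that plane), strictly on one side, so it is a facet. For a face $\triangle v_0 x_i x_{i+1}$, convexity of the polygon does the same for the plane through $N,\varphi(x_i),\varphi(x_{i+1})$. Since $\varphi(V)\subset\mathbb S^2$ is in convex position and these facets tile the sphere $|T|$, they are all of $\partial P_\varphi$; hence $\varphi=\tilde\eta(\phi)\in Y(T,v_0)$, and $\varphi$ is an embedding because $\partial P_\varphi$ is. Part (c) is then formal: $\tilde\eta\circ\tilde\pi$ and $\tilde\pi\circ\tilde\eta$ are the identity on vertices, hence everywhere, and both maps are continuous since $\pi$ and $\pi^{-1}$ are.

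\emph{Parts (d), (f), (g).} For (d): $\tilde\pi$ is a homeomorphism $Y(T,v_0)\to Z(T_0)$ respecting the two‑element partitions into orientation classes — the $Y$‑orientation of $\varphi$ is the sign of the homeomorphism $p_q\circ\varphi\colon|T|\to\mathbb S^2$, the $Z$‑orientation of $\phi$ is the sign of $\phi\colon|T_0|\to\mathbb R^2$, and these signs determine one another (compare them on the star of a fixed interior vertex, where $p_q$, $\pi$, and the affine extension induce the same local frame up to a fixed global sign) — so $\tilde\pi$ restricts to a homeomorphism $Y(T,\psi,v_0)\to Z(T_0,\tilde\pi(\psi))$. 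For (f): given $\varphi\in Y(T,\psi)$, the rotations $g\in SO(3)$ with $g(N)=\varphi(v_0)$ form an $SO(2)$‑coset; conjugating by a $z$‑axis rotation changes $\tilde\pi$ of the normalized realization by the corresponding rotation of $\mathbb R^2$ about the origin (since $\pi$ intertwines these and the affine extension commutes with linear maps), so there is a unique $g$ in the coset making $\phi(j)-\phi(i)$ point along the positive $x$‑axis (possible as $\phi(i)\neq\phi(j)$). This yields the unique pair $(\varphi_0,g)$ with $\varphi=g\circ\varphi_0$, $\varphi_0=g^{-1}\varphi\in Y(T,\psi,v_0)$ and $\tilde\pi(\varphi_0)\in Z(T_0,\tilde\pi(\psi),e_{ij})$; then $\varphi\mapsto(\tilde\pi(\varphi_0),g)$ is a continuous bijection onto $Z(T_0,\tilde\pi(\psi),e_{ij})\times SO(3)$ with continuous inverse $(\phi,g)\mapsto g\circ\tilde\eta(\phi)$ (well defined into $Y(T,\psi)$ by (d)), giving the stated homeomorphism. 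Part (g) is then immediate, because $Y_0(T)$ is $SO(3)$‑invariant (every rotation fixes $O$): in the bijection of (f) one has $\varphi\in Y_0(T,\psi)\iff\varphi_0\in Y_0(T,\psi,v_0)\iff\tilde\pi(\varphi_0)\in\tilde\pi(Y_0(T,v_0))$, so restricting the homeomorphism of (f) gives $Y_0(T,\psi)\cong\big(\tilde\pi(Y_0(T,v_0))\cap Z(T_0,\tilde\pi(\psi),e_{ij})\big)\times SO(3)$.

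\emph{Part (e) and the main obstacle.} Let $\varphi\in Y(T,v_0)$, $\phi=\tilde\pi(\varphi)$. By the halfspace property of $\rho$, condition (i) that $(0,0)=\pi(S)$ be interior to $\phi(|T_0|)=\rho(D_\varphi)$ is equivalent to the downward $z$‑axis ray from $N$ exiting $P_\varphi$ through the interior of a unique face $\tau=\triangle\varphi(A)\varphi(B)\varphi(C)$ of $D_\varphi$, at a point $z^{\ast}$ on the plane of $\tau$; and since $O$ is the midpoint of $[N,S]$ with $N\in\partial P_\varphi$, one checks $O\in\operatorname{int}P_\varphi$ iff $z^{\ast}$ has negative height. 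Writing the plane of $\tau$ as $n\cdot x=c$ with $|n|=1$, $n$ the outward normal, the downward exit forces $n_3<0$, and $N$ being a vertex of $P_\varphi$ off $\tau$ forces $n\cdot N=n_3<c$ strictly; then $z^{\ast}$ has height $c/n_3$, so $O\in\operatorname{int}P_\varphi\iff c>0$. On the other hand, intersecting the circumcircle $\Gamma=\{|x|=1\}\cap\{n\cdot x=c\}$ of $\phi(\tau)$ with a suitable vertical plane through the $z$‑axis and simplifying via $\frac{\cos\theta}{1-\sin\theta}=\frac{1+\sin\theta}{\cos\theta}$ gives $\operatorname{power}\big((0,0),\pi(\Gamma)\big)=\dfrac{c+n_3}{c-n_3}$, which — using $c-n_3>0$ — is $>-1$ exactly when $c>0$. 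Hence (ii) holds iff $O\in\operatorname{int}P_\varphi$, which together with (i) shows $\tilde\pi(Y_0(T,v_0))=\{\phi\in Z(T_0):\text{(i) and (ii)}\}$, proving (e). I expect (e) to be the main obstacle: the computation is short, but the conclusion is sign‑sensitive — one must track the orientation of the outward normal, which spherical cap goes to the inside of the image circle, and, crucially, the strict inequality $c-n_3>0$ coming from $N$ being a vertex of $P_\varphi$. A secondary point that must not be skipped is the appeal in (b) to ``local strict Delaunay $\Rightarrow$ global empty circle'' on a convex polygon; if this is not quoted it should be established by the standard flip / Delaunay‑lemma argument, which uses convexity of the polygon so that the segments along which one propagates the empty‑circle property stay inside.
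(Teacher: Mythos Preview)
Your argument is correct and, for parts (a)--(d), (f), (g), it is the same approach as the paper's, only written out in much greater detail: the paper dispatches (a)--(c) in one line (``empty circle property plus circle-preservation of $\pi$''), handles (d) by a chain of orientation equivalences exactly as you do, and reduces (f)--(g) to the single observation that $z$-axis rotations commute with $\pi$.

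The genuine difference is in part (e). You compute the power directly: writing the facet plane as $n\cdot x=c$ with outward unit normal $n$, you obtain
\[
\operatorname{power}\bigl((0,0),\pi(\Gamma)\bigr)=\frac{c+n_3}{c-n_3},
\]
and then read off $\operatorname{power}>-1\iff c>0\iff O\in\operatorname{int}P_\varphi$. The paper instead argues geometrically: since $O\in\operatorname{int}P_\varphi$, the circumcircle $C$ of the relevant face lies in an open hemisphere bounded by a great circle $\tilde C$ (namely the one parallel to the facet plane); then $\pi(C)$ lies strictly inside the disk bounded by $\pi(\tilde C)$, so $\operatorname{power}((0,0),\pi(\tilde C))\le\operatorname{power}((0,0),\pi(C))$, and finally the Intersecting Chords Theorem applied to the image of any pair of antipodal points gives $\operatorname{power}((0,0),\pi(\tilde C))=-1$. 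Your computation has the advantage of yielding the equality $\tilde\pi(Y_0(T,v_0))=\{\phi:\text{(i) and (ii)}\}$ in one stroke, which is what is actually used in the proof of Theorem~\ref{main12}; the paper proves only one inclusion explicitly and remarks that the other ``could be proved by reversing our argument.'' The paper's route, on the other hand, avoids any coordinate computation and makes the role of the great-circle threshold transparent. Your flagged ``secondary point'' (local strict Delaunay $\Rightarrow$ global empty circle on a convex polygon) is indeed silently assumed in the paper; it is standard, but you are right that it is needed for (b).
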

\begin{proof}
(a) and (b) are true by the empty circle property of the (strict) Delaunay triangulations and the fact that the stereographic projection preserves circles. (c) is a direct consequence from the definition.

	\begin{enumerate}
	 
	\item[(d)] Given a convex realization $\varphi\in Y(T,v_0)$ and $q_1$ inside $\psi(|T|)$ and $q_2$ inside $\varphi(|T|)$, the following elementary facts related to orientations are equivalent by the definition and properties of stereographic projections. 
	\begin{enumerate}
	    \item[(i)]	$\varphi\in Y(T,\psi,v_0)$. 
	
	\item [(ii)] $\psi$ and $\varphi$ have the same orientation. 
	
	\item [(iii)] $\pi_{q_1}\circ\psi$ is isotopic to $\pi_{q_2}\circ\varphi$. 
	
	\item [(iv)] $\pi_{q_1}\circ\psi$ and $\pi_{q_2}\circ\varphi$ have the same orientation.
	
	\item [(v)] $\tilde \pi(\psi)$ and $\tilde \pi(\varphi)$ have the same orientation.
	
	\item [(iv)] $\tilde \pi(\varphi)\in Z(T_0,\tilde \pi(\psi))$.
	\end{enumerate}

	\item[(e)] We will prove that any $\tilde \pi(\varphi)$ for $\varphi\in Y_0(T,v_0)$ satisfies (i) and (ii). The other way could be proved by reversing our argument.
	If $\varphi\in Y_0(T,v_0)$, then the origin is inside $\varphi(|T|)$. Then the ray starting from the north pole passing through the origin intersects with  $\varphi(|T|)$ at a unique point $q$ in the interior of $\varphi(|T_0|)$. So  part (i) is satisfied.
	
	To prove part (ii) in (e), notice first that by the definition of $Y_0(T, v_0)$, the circumcircle $C$ of any triangle in $\varphi(T_0)$ is contained in the interior of some hemisphere bounded by a great circle $\tilde C$ in $\mathbb{S}^2$. Then $\pi(C)$ is strictly inside $\pi(\tilde C)$, then $power((0,0), \pi(\tilde C)) < power((0,0), \pi( C))  $. By the Intersecting Chords Theorem, 
	$$power((0,0), \pi(\tilde C)) = -1 < power((0,0), \pi( C)).$$ 
	
	\end{enumerate}
	(f) and (g) follow from the fact that the rotation along the $z$-axis (or the origin in the $xy$-plane) is invariant under the stereographic projection.

\end{proof}

\begin{proof}[Proof of Theorem \ref{main2}]
This is an immediate consequence of Corollary \ref{tri of polygon} and part (f) of Lemma \ref{proj}.
\end{proof}

\begin{proof}[Proof of Theorem \ref{main12}] By part (g) of Lemma \ref{proj}, we only need to show that 
$\tilde \pi (Y_0(T,v_0))\cap Z(T_0,\tilde \pi(\psi),e_{ij})$ is homeomorphic to $\mathbb R^{2|V|-3}$. It is straightforward to verify that
$$
\varphi\in \tilde \pi (Y_0(T,v_0))\cap Z(T_0,\tilde \pi(\psi),e_{ij})
$$ 
is uniquely determined by $\theta(\varphi)$, $\varphi^{-1}(0,0)$ and $d(\varphi)$, where $d(\varphi)$ is the
Euclidean diameter of $\varphi(|T_0|)$ and describes the scaling transformation needed to determine $\varphi$.
So $\mathcal F:\varphi\mapsto(\theta(\varphi),\varphi^{-1}(0,0),d(\varphi))$ gives a continuous injective map from $\tilde \pi (Y_0(T,v_0))\cap Z(T_0,\tilde \pi(\psi),e_{ij})$ to $\mathcal A_E(T_0)\times int(|T_0|)\times(0,\infty)$, where $int(|T_0|)=|T_0|\backslash \partial (|T_0|)$ is
homeomorphic to $\mathbb R^2$ and $\mathcal A_E(T_0)$ is homeomorphic to $\mathbb R^{2|V|-6}$ by Lemma \ref{ae} and a dimension counting.

It suffices to show that the image 
$Imag(\mathcal F)$ of $\mathcal F$ 
is homeomorphic to $\mathbb R^{2|V|-3}$. By part (e) of Lemma \ref{ae},
for any $(\theta,q)\in\mathcal A_E(T_0)\times int(|T_0|)$,
$$
Imag(\mathcal F)\cap\big(\{(\theta,q)\}\times(0,\infty)\big)=
(\theta,q)\times(0,d_{\theta,q})$$
where $(\theta,q)\mapsto d_{\theta,q}$ is a continuous map from $\mathcal A_E(T_0)\times int(|T_0|)$ to $(0,\infty]$. Let us assume that $\arctan(\infty)=\pi/2$ and then
$$
(\theta,q,d)\mapsto\left(\theta,q,\frac{\arctan (d)}{\arctan (d_{\theta,q})}\right)
$$
is a homeomorphism from $Imag(\mathcal F)$ to $\mathcal A_E\times int(|T_0|)\times(0,1)$, which is homeomorphic to $\mathbb R^{2|V|-6}\times\mathbb R^2\times\mathbb R=\mathbb R^{2|V|-3}$.
\end{proof}

\section{Delaunay triangulations of convex polygons and flat tori}
In this section, we will discuss the space of Delaunay geodesic triangulations of convex polygons and flat tori.

\subsection{Convex polygons}
A convex polygon $P$ in the plane is determined by the position of a sequence of cyclically ordered vertices. For a fixed convex polygon $P$ in the plane with a triangulation $\psi:T\to P$, denote the set of Delaunay triangulations of $P$ which are isotopic to $\psi$ and identical restricted to the boundary vertices with $\psi$ as $X(T, \psi)$. Notice that $X(T, \psi)$ is different from the space $Z(T, \psi)$ in Section $3$, since the positions of boundary vertices of $T$ for elements in $X$ are fixed. The following example shows that $X(T,\psi)$ may not be connected.

	\begin{figure}[h]
		\includegraphics[width=0.75\textwidth]{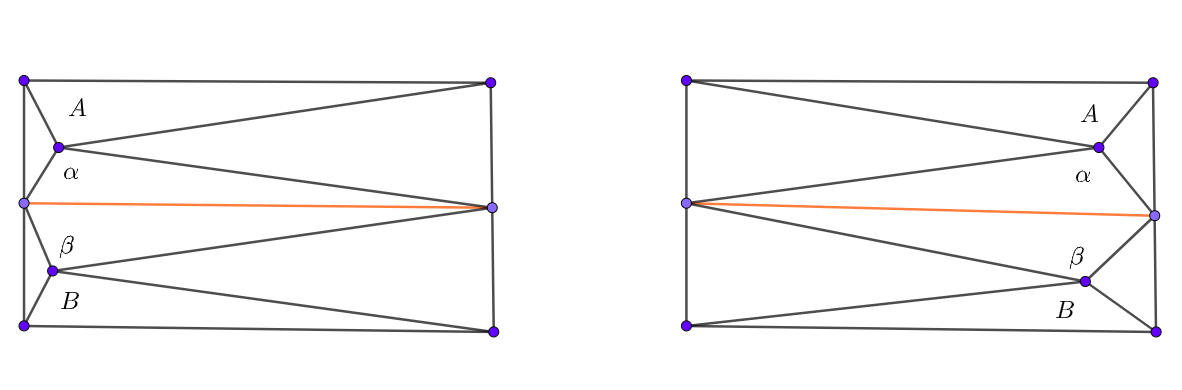}
		\caption{Counterexample: a convex polygon.}
			\label{rectangle}

	\end{figure}

In Figure \ref{rectangle}, there are nine interior edges in the triangulation, eight of which are Delaunay. The yellow edge might not be Delaunay. In Figure \ref{rectangle}, if the vertices $A$ and $B$ are close to the vertical boundaries, then $\alpha$ and $\beta$ are both acute, so we can construct two Delaunay triangulations $\tau_1$ and $\tau_2$ on the left and right. If there is a family of Delaunay triangulations connecting $\tau_1$ and $\tau_2$, the vertex $A$ or $B$ will pass the perpendicular bisector of the horizontal boundary of this rectangle. If the rectangle is flat enough, the angle sum $\alpha + \beta >\pi$ when one of $A$ and $B$ lies on the perpendicular bisector. This shows that $X(T,\psi)$ for this rectangle $P$ is not connected.

\subsection{Delaunay triangulations on flat tori.}
 Assume $|T|$ is homeomorphic to the torus $\mathbb T^2$ with a marking homeomorphism whose restriction on $T^{(1)}$ is denoted as $\psi$. An embedding $\varphi:{T}^{(1)} \to \mathbb{T}^2$ is a \textit{Delaunay geodesic triangulation} with the combinatorial type $({T},\psi)$ satisfying that
\begin{enumerate}
	\item [(a)] the restriction $\varphi_{ij}$ of $\varphi$ on each edge $e_{ij}$, identified with a unit interval $[0,1]$, is a geodesic parametrized with constant speed, and 
	\item [(b)] $\varphi$ is homotopic to $\psi$, and
	\item [(c)] equation (\ref{delaunay}) is satisfied for all edges in $T$.
\end{enumerate}
Let  $X = X(T, \psi)$ denote the set of all such geodesic triangulations, which is called the \textit{deformation space of Delaunay geodesic triangulations of $\mathbb T^2$} of combinatorial type $(T, \psi)$.

The following example shows that the space of Delaunay geodesic triangulations $X = X( T, \psi)$ may not be connected. 

	\begin{figure}[h]
		\includegraphics[width=0.5\textwidth]{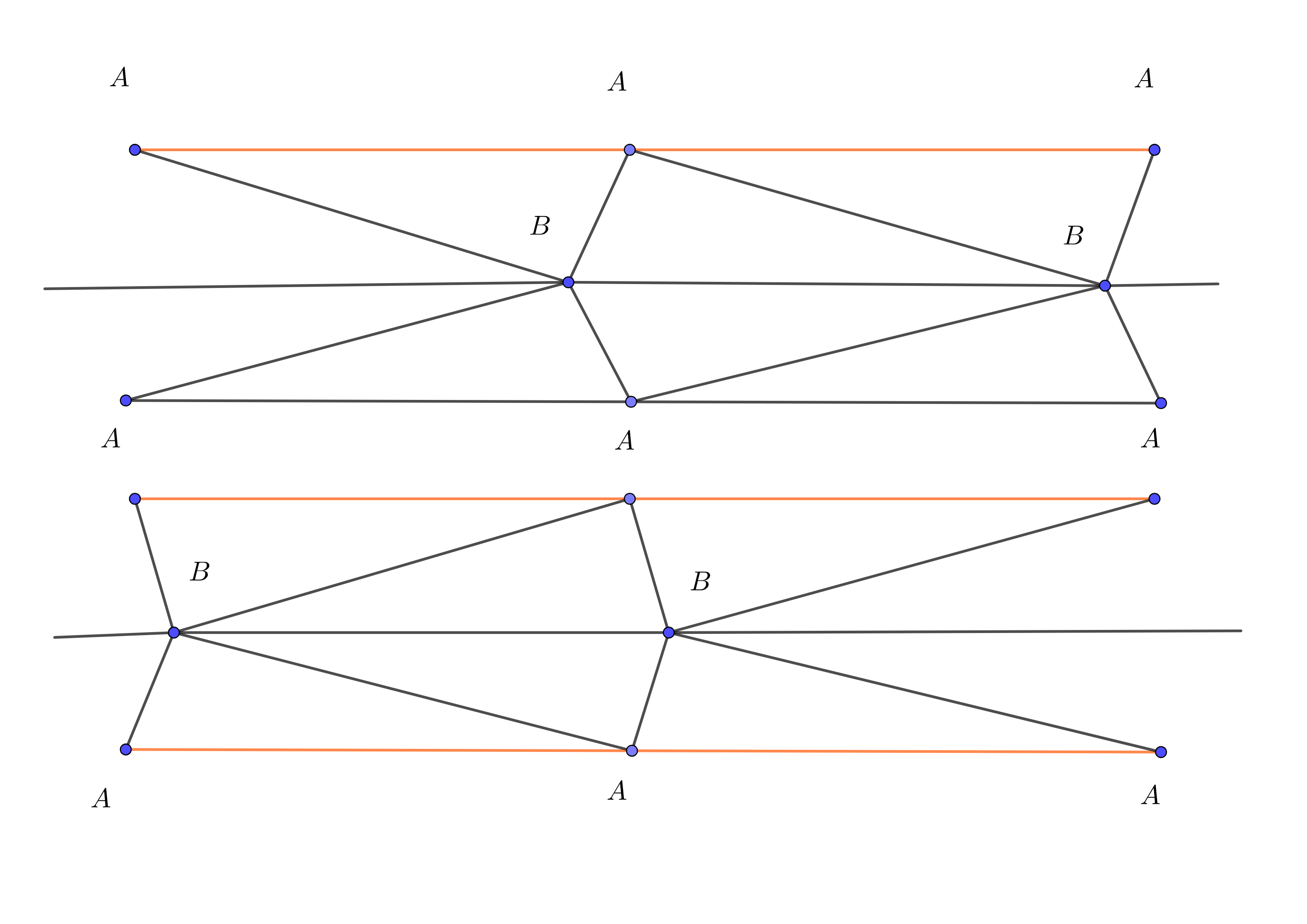}
		\caption{Counterexample: a flat torus.}
			\label{torus}
	\end{figure}

In Figure \ref{torus}  , we draw two geodesic triangulations $\tau_1$ and $\tau_2$ on a flat torus. For each geodesic triangulation, we draw two fundamental domains of this torus. The triangulation has two vertices and six edges. Fixing the vertex $A$ at a point in the universal covering, we can see that the position of the vertex $B$ determines a geodesic triangulation of this flat torus. Notice that $\tau_1$ and $\tau_2$ are both Delaunay, since all angles in these triangulations are acute when $B$ is sufficiently close to the vertical line connecting two adjacent copies of $A$ in the universal covering. 

We can choose the shape of the fundamental domain of the flat torus as shown in the picture. Then $\tau_1$ and $\tau_2$ are in two different connected components of the space of Delaunay triangulations of this flat torus. This observation is based on the following fact: any path connecting $\tau_1$ and $\tau_2$ needs to move the vertex $B$ from the right to the left. However, we can choose a flat enough fundamental domain such that when $B$ passes the perpendicular bisector of the yellow edge, the yellow edge is never Delaunay. This implies that the space $X = X(T, \psi)$ for this flat torus is not connected.

\end{document}